\documentclass[10pt,a4paper]{article}
\usepackage[latin1]{inputenc}
\usepackage{amsmath}
\usepackage{amsfonts}
\usepackage{amssymb}
\usepackage{euler}
\author{Olli Hella}
\title{On the characterization of $M$-metrizable spaces}
\date{27 June 2017}
\usepackage{amsthm}
\theoremstyle{plain}
\newtheorem{thm}{Theorem}[section] 
\theoremstyle{definition}
\newtheorem{defn}[thm]{Definition} 
\newtheorem{defns}[thm]{Definitions} 
\newtheorem{exmp}[thm]{Example}

\newtheorem{rmrk}[thm]{Remark}
\newtheorem{lmm}[thm]{Lemma}
\newtheorem{coroll}[thm]{Corollary}
\newtheorem{nttn}[thm]{Notation}
\newtheorem{empt}[thm]{}
\begin{document}
\maketitle
\noindent 
ABSTRACT.
We introduced the concept of a metric value set (MVS) in an earlier paper \cite{GM} and developed the idea further in \cite{AS}. In this paper we study locally $M$-metrizable spaces and the products of $M$-metrizable spaces. Finally we prove a characterization of  commutatively metrizable topological spaces by means of the bases of a quasiuniformity.

\section*{Introduction}
 
First we recall definitions from \cite{GM} and \cite{AS}. 
A \textit{metric value set} (MVS) is a set $M$ with at least two elements and a binary operation $+$ satisfying the following conditions:
\\
(M1) The operation $+$ is associative.
\\
(M2) The operation $+$ has a (then unique) neutral element $e$. Let $M^{*}=M\setminus\{e\}$.
\\
(M3) If $m_{1}+m_{2}=e$, then $m_{1}=m_{2}=e$.
\\
(M4) For every $m_{1},m_{2}\in M^{*}$ there are $m_{3}\in M^{*}$ and $m_{4},m_{5}\in M$ such that $m_{1}=m_{3}+m_{4}$ and $m_{2}=m_{3}+m_{5}$.

If the operation $+$ is also commutative, we say that $M$ is a \textit{commutative} MVS. 

The operation $+$ induces the relations $\unlhd$ and $\lhd$ in $M$ defined by setting $m_{1} \unlhd m_{2} $ if there is $m_{3}\in M$ such that $m_{1}+m_{3}=m_{2}$ and $m_{1}\lhd m_{2}$ if there is $m_{3}\in M^{*}$ such that $m_{1}+m_{3}=m_{2}$.

We say that $M$ is \textit{atom-free} if $M$ is commutative and for every $m\in M^{*}$ there exists $n\in M^{*}$ such that $n\lhd m$. We say that $M$ is \textit{strictly atom-free} if $M$ is commutative and for every $m\in M^{*}$ there exists $n\in M^{*}$ such that $n\lhd m \ntriangleleft n$.

 A \textit{quasimetric function} $f$ is a map $f\colon X\times X\rightarrow M$ from the square of a set $X$ to an MVS $M$ that satisfies the following conditions:
 \\
(f1) $f(x,z)\unlhd f(x,y)+f(y,z)$ for every $x,y,z\in X$ (triangle inequality).
\\
(f2) $f(x,x)=e$ for every $x\in X$.

A quasimetric function $f$ is a \textit{metric function} if it also satisfies the condition
\\
(f3) $f(x,y)=f(y,x)$ for every $x,y\in X$ (symmetry). 
 
 Let $f\colon X\times X\rightarrow M$ be a quasimetric function. Then the \textit{topology} $\mathcal{T}_{f}$ in $X$ \textit{induced by} $f$ is defined by the family
  \[
B_{f}(x,m)=\{y\in X: f(x,y)\lhd m\},\qquad m\in M^{*},
\]
of \textit{open balls} as the open neighbourhood base at $x$ for every $x\in X$. We call $(X,f)$ a \textit{quasimetric space} or an \textit{$M$-space} and denote $\textbf{B}_{f}\colon X\rightarrow \mathcal{P}(\mathcal{P}(X))$, $
\textbf{B}_{f}(x)=\{B_{f}(x,m): m\in M^{*}\}$, and call $\textbf{B}_{f}$ an open \textit{neighbourhood system}.

Let $(X,\mathcal{T})$ be a topological space. If there exists a quasimetric function $f\colon X\times X \rightarrow M$ 
such that the topology $\mathcal{T}_{f}$ induced by $f$ equals $\mathcal{T}$, then we say that $(X,\mathcal{T})$ or 
shortly $X$ is an $M$-\textit{metrizable} space. We say that $X$ is \textit{MVS-metrizable} if there is an MVS $M$ 
such that $X$ is $M$-metrizable. 

Let $(M_{1},+_{1})$ and $(M_{2},+_{2})$ be two metric value sets and $h\colon M_{1}\rightarrow M_{2}$ a map. We say that $h$ is a \textit{metric value set homomorphism} or shortly a \textit{homomorphism} if it satisfies the following two conditions:
\\
(H1) For all $m_{1}\in M$ it holds that $h(m_{1})=e_{M_{2}} \Longleftrightarrow m_{1}=e_{M_{1}}$.
\\
(H2) $h(m+_{1}n)=h(m)+_{2}h(n)$ for every $m,n\in M_{1}$.

 In this paper we define the concept of locally $M$-metrizable space in a natural way. Then we show that the topology of a metacompact locally $M$-metrizable space can be induced by a quasimetric function, when $M$ is atom-free. We also study shortly the topology of product spaces. Finally we show how to characterize the class of commutatively metrizable topological spaces by means of the bases of a quasiuniformity. 

\section{Metric functions and induced topology}
\begin{defn}
Let $X$ be a set, $\mathcal{T}$ a topology on $X$ and $\textbf{U}= \{U_{i}:i\in I\}$ a family of subsets of $X$. We say that $\textbf{U}$ is a \textit{generating set} for $\mathcal{T}$ or that it \textit{generates} $\mathcal{T}$ if $\mathcal{T}$ is the coarsest topology containing $\textbf{U}$. This is equivalent to saying that $\textbf{U}$ is a \textit{subbase} of $\mathcal{T}$. It is well known that the topology $\mathcal{T}$ generated by the subbase $\textbf{U}$ is the collection of unions of finite intersections of sets in $\textbf{U}$, i.e., the sets of the form  
\[
\bigcup_{j\in J}\bigcap_{k\in K_{j}} U_{jk} 
\]
where $J$ is an arbitrary index set, $K_{j}$ a finite index set for all $j\in J$ and $U_{jk}\in \textbf{U}$ for every $j\in J$ and $k\in K_{j}$.

Let $X$ be a set and let $(Y_{i},\mathcal{T}_{i})$ be a topological space and $f_{i}\colon X\rightarrow Y_{i}$ a map for every $i\in I$. The \emph{topology} of $X$ \emph{induced} by the family $(f_{i})_{i\in I}$ is generated by the subbase $\textbf{U}=\{f_{i}^{-1}U: i\in I, U\in \mathcal{T}_{i} \}$

\end{defn} 

\begin{lmm}\label{generating equivalence}
\textit{Let $X$ be a set and $\emph{\textbf{U}}$ a family of subsets of $X$. Let $\emph{\textbf{V}}\subset \emph{\textbf{U}}$ be such that every element of $\emph{\textbf{U}}$ is a union of some sets in $\emph{\textbf{V}}$. Then $\emph{\textbf{U}}$ and $\emph{\textbf{V}}$ generate the same topology.}$\qquad \square$
\end{lmm}
\noindent
Recall that a map $\textbf{B}\colon X\rightarrow \mathcal{P}(\mathcal{P}(X))$ with $\textbf{B}(x)\neq \emptyset$ for every $x\in X$ is a neighbourhood system for some topology on $X$ if it satisfies the following conditions:
\\
(B1) $x\in U$ for every $U\in \textbf{B}(x)$.
\\
(B2) If $U,V\in \textbf{B}(x)$, then there exists $W\in \textbf{B}(x)$ such that $W\subset U\cap V$.
\\
(B3) For every $U\in \textbf{B}(x)$ there exists $V\in \textbf{B}(x)$ such that for every $y\in V$ there exists $W\in \textbf{B}(y)$ such that $W\subset U$.

Let $\textbf{B}$ satisfy (B1), (B2) and the following condition:
\\
(B3') For every $U\in \textbf{B}(x)$ and $y\in U$ there exists $V\in \textbf{B}(y)$ such that $V\subset U$.
\\
Then we say that it is an open neighbourhood system. Since (B3) follows from (B3'), every open neighbourhood system is also a neighbourhood system.

\begin{defn}
We say that two neighbourhood systems $\textbf{B}$ and $\textbf{B}'$ on a set $X$ are \emph{equivalent} if they define the same topology. This is true when for every $x\in X$ and $B_{x}\in \textbf{B}(x)$ there exists a $B'_{x}\in \textbf{B}'(x)$ such that $B'_{x}\subset B_{x}$, and conversely.
\end{defn}

\begin{lmm}\label{generator set} 
\textit{Let $(Y_{i},\mathcal{T}_{i})$ be a topological space and $f_{i}\colon X\rightarrow Y_{i}$ a map for every $i\in I$. Let $\emph{\textbf{B}}_{i}$ be an open neighbourhood system for $Y_{i}$ for every $i\in I$. Then the topology induced by the family $(f_{i})_{i\in I}$ is generated by the subbase
\[
\emph{\textbf{V}}=\{f_{i}^{-1}V:i\in I, y_{i}\in Y_{i},V\in \emph{\textbf{B}}_{i}(y_{i})\}.
\]
}\begin{proof} Let $\mathcal{T}$ be the topology on $X$ induced by the maps $f_{i}$. The associated generating set of $\mathcal{T}$ is $\textbf{U}= \{f_{i}^{-1}U:i\in I, U\in \mathcal{T}_{i} \}$. Clearly $\textbf{V}\subset \textbf{U}$. Let $f_{i}^{-1}U\in \textbf{U}$. As an open set in $Y_{i}$, $U$ can be expressed as an union $\bigcup_{x\in U}B_{ix}$, $B_{ix}\in \textbf{B}_{i}(x)$. Thus $f_{i}^{-1}U=\bigcup_{x\in U}f_{i}^{-1}B_{ix}$. According to \ref{generating equivalence}, $\textbf{V}$ generates the same topology as $\textbf{U}$, which is the topology induced by the maps $f_{i}$.
\end{proof} 
\end{lmm}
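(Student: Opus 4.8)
The plan is to reduce the statement to Lemma~\ref{generating equivalence}. By definition, the topology induced by the family $(f_{i})_{i\in I}$ is the one generated by the subbase $\textbf{U}=\{f_{i}^{-1}U:i\in I, U\in \mathcal{T}_{i}\}$. Hence it suffices to prove that $\textbf{V}$ and $\textbf{U}$ generate the same topology, and Lemma~\ref{generating equivalence} supplies a convenient sufficient condition for exactly this: one need only check that $\textbf{V}\subset\textbf{U}$ and that every member of $\textbf{U}$ is a union of members of $\textbf{V}$.

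First I would establish the inclusion $\textbf{V}\subset\textbf{U}$. The only point requiring care is that each basic neighbourhood $V\in\textbf{B}_{i}(y_{i})$ is in fact open in $Y_{i}$, i.e.\ $V\in\mathcal{T}_{i}$; this is precisely what axiom (B3$'$) guarantees, since it says that every point of $V$ has a $\textbf{B}_{i}$-neighbourhood contained in $V$, so $V$ is a neighbourhood of each of its points and is therefore open. Granting this, any $f_{i}^{-1}V$ with $V\in\textbf{B}_{i}(y_{i})$ has the form $f_{i}^{-1}U$ with $U\in\mathcal{T}_{i}$, and hence lies in $\textbf{U}$.

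Next I would show that each $f_{i}^{-1}U\in\textbf{U}$ is a union of sets from $\textbf{V}$. Since $U$ is open and $\textbf{B}_{i}$ is an open neighbourhood system, for every $x\in U$ I can choose $B_{ix}\in\textbf{B}_{i}(x)$ with $B_{ix}\subset U$, whence $U=\bigcup_{x\in U}B_{ix}$. Because taking preimages commutes with unions, $f_{i}^{-1}U=\bigcup_{x\in U}f_{i}^{-1}B_{ix}$, and each $f_{i}^{-1}B_{ix}$ belongs to $\textbf{V}$ by construction. Applying Lemma~\ref{generating equivalence} then yields that $\textbf{V}$ generates the induced topology, completing the argument. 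The main obstacle here is not any calculation but keeping straight the two distinct roles played by the open neighbourhood system: axiom (B3$'$) is what makes the basic neighbourhoods open (securing $\textbf{V}\subset\textbf{U}$), while the neighbourhood-base property is what lets each open $U$ be covered from inside by members of $\textbf{B}_{i}$ (securing the required unions).
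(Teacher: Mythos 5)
Your proposal is correct and follows essentially the same route as the paper: verify $\textbf{V}\subset\textbf{U}$, write each $U\in\mathcal{T}_{i}$ as a union of members of $\textbf{B}_{i}$ so that $f_{i}^{-1}U$ is a union of members of $\textbf{V}$, and invoke Lemma~\ref{generating equivalence}. You merely spell out one point the paper leaves implicit, namely that (B3$'$) guarantees each $V\in\textbf{B}_{i}(y_{i})$ is open.
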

\noindent
Let $X$ be a set, $Y_{i}$ be an $M_{i}$-metrizable space and $f_{i}\colon X\rightarrow Y_{i}$ a map for every $i\in I$. In some cases the topology induced by the family $(f_{i})_{i\in I}$ is induced by a quasimetric function. Next we study the case where $I$ is a singleton.

\begin{thm}
 Let $Y$ be an $M$-metrizable space and $f_{1}\colon Y\times Y\rightarrow M$ a quasimetric function which induces the topology of $Y$. Let $X$ be a set, $g\colon X\rightarrow Y$ a map from $X$ into $Y$ and $f_{2}\colon X\times X\rightarrow M$ the map defined by setting
\[
f_{2}(x_{1},x_{2})=f_{1}(g(x_{1}),g(x_{2})).
\]
Then $f_{2}$ is a quasimetric function and the topology $\mathcal{T}$ on $X$ induced by $g$ is the same as the topology $\mathcal{T}_{f_{2}}$ induced by $f_{2}$.

\begin{proof}

First we show that $f_{2}$ is a quasimetric function. 

Let $x_{1},x_{2},x_{3}\in X$. Now
\begin{align*}
f_{2}(x_{1},x_{3})=& f_{1}(g(x_{1}),g(x_{3}))
\\
\unlhd& f_{1}(g(x_{1}),g(x_{2})) +  f_{1}(g(x_{2}),g(x_{3}))
= f_{2}(x_{1},x_{2})+f_{2}(x_{2},x_{3}).
\end{align*} 
Thus (f1) holds. The condition (f2) holds, because $f_{2}(x,x)=f_{1}(g(x),g(x))=e$. Thus $f_{2}$ induces a topology $\mathcal{T}_{f_{2}}$ in $X$.

We show that $\mathcal{T}_{f_{2}}\subset\mathcal{T}$. For every $x\in X$ and $m\in M^{*}$ the following holds:
\begin{align*}
B_{f_{2}}(x,m)=&\{y\in X:f_{1}(g(x),g(y))\lhd m\}
\\
=&\{y\in X:g(y)\in B_{f_{1}}(g(x),m)\}
\\
=&g^{-1}B_{f_{1}}(g(x),m).
\end{align*}

To show that $\mathcal{T}\subset\mathcal{T}_{f_{2}}$ we need to prove that $\{g^{-1}B_{f_{1}}(y,m):y\in Y, m\in M^{*}\}$ which is a generating set of $\mathcal{T}$ by \ref{generator set}, is included in $\mathcal{T}_{f_{2}}$.

Let $V=B_{f_{1}}(y,m)$ and $x\in g^{-1}V$. Then $g(x)\in V$ and therefore $m_{1}=f_{1}(y,g(x))\lhd m$. Let $m_{2}\in M^{*}$ be such that $m_{1}+m_{2}=m$. Let $z\in B_{f_{2}}(x,m_{2})$. Then $f_{1}(g(x),g(z))=f_{2}(x,z)\lhd m_{2}$. Thus by triangle inequality $f_{1}(y,g(z))\lhd m_{1}+m_{2}=m$, and therefore $z\in g^{-1}V$. Hence $B_{f_{2}}(x,m_{2})\subset g^{-1}V$, which proves that $\mathcal{T}\subset\mathcal{T}_{f_{2}}$.
\end{proof}
\end{thm}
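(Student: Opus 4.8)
The plan is to separate the statement into its two assertions: that $f_{2}$ is a quasimetric function, and that $\mathcal{T}=\mathcal{T}_{f_{2}}$. The first is immediate from the definition $f_{2}(x_{1},x_{2})=f_{1}(g(x_{1}),g(x_{2}))$. For (f1) I would apply the triangle inequality for $f_{1}$ at the points $g(x_{1}),g(x_{2}),g(x_{3})$, giving $f_{2}(x_{1},x_{3})=f_{1}(g(x_{1}),g(x_{3}))\unlhd f_{1}(g(x_{1}),g(x_{2}))+f_{1}(g(x_{2}),g(x_{3}))=f_{2}(x_{1},x_{2})+f_{2}(x_{2},x_{3})$, and for (f2) I note $f_{2}(x,x)=f_{1}(g(x),g(x))=e$. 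Hence $f_{2}$ is a quasimetric function and induces a topology $\mathcal{T}_{f_{2}}$.

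For the topological equality I would first rewrite the subbase of $\mathcal{T}$ in a usable form. Since the inducing family is the single map $g\colon X\to Y$ and $\textbf{B}_{f_{1}}$ is an open neighbourhood system for $Y$, Lemma \ref{generator set} lets me replace the generating set $\{g^{-1}U:U\text{ open in }Y\}$ by the concrete family $\{g^{-1}B_{f_{1}}(y,m):y\in Y,\,m\in M^{*}\}$. The inclusion $\mathcal{T}_{f_{2}}\subset\mathcal{T}$ then falls out by unwinding a ball: $B_{f_{2}}(x,m)=\{y:f_{1}(g(x),g(y))\lhd m\}=g^{-1}B_{f_{1}}(g(x),m)$, which is a subbasic, hence $\mathcal{T}$-open, set; as these balls form a neighbourhood base for $\mathcal{T}_{f_{2}}$, this inclusion follows.

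The converse $\mathcal{T}\subset\mathcal{T}_{f_{2}}$ carries the real work, and I expect the ball-inside-ball containment to be the main obstacle, since it is precisely here that the MVS axioms must replace ordinary real arithmetic. It suffices to show each subbasic set $g^{-1}B_{f_{1}}(y,m)$ is $\mathcal{T}_{f_{2}}$-open, i.e.\ that around each point $x$ of it I can fit an $f_{2}$-ball. As $g(x)\in B_{f_{1}}(y,m)$, the value $m_{1}:=f_{1}(y,g(x))$ satisfies $m_{1}\lhd m$, so by the definition of $\lhd$ there is $m_{2}\in M^{*}$ with $m_{1}+m_{2}=m$; this $m_{2}$ serves as the radius. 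For $z\in B_{f_{2}}(x,m_{2})$ I have $f_{1}(g(x),g(z))\lhd m_{2}$, and the triangle inequality gives $f_{1}(y,g(z))\unlhd m_{1}+f_{1}(g(x),g(z))$, and I must extract $f_{1}(y,g(z))\lhd m$.

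To cross this last step I would isolate two order-type facts valid in any MVS. First, left-monotonicity: if $a\lhd b$, say $a+s=b$ with $s\in M^{*}$, then associativity gives $(c+a)+s=c+b$, so $c+a\lhd c+b$. Second, mixed transitivity: if $a\unlhd b\lhd c$, with $a+s=b$ and $b+t=c$ where $t\in M^{*}$, then $a+(s+t)=c$, and $s+t\in M^{*}$ because (M3) forbids $s+t=e$ unless $t=e$; hence $a\lhd c$. Granting these, $f_{1}(g(x),g(z))\lhd m_{2}$ yields $m_{1}+f_{1}(g(x),g(z))\lhd m_{1}+m_{2}=m$, and chaining with the triangle inequality gives $f_{1}(y,g(z))\lhd m$, i.e.\ $z\in g^{-1}B_{f_{1}}(y,m)$. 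Thus $B_{f_{2}}(x,m_{2})\subset g^{-1}B_{f_{1}}(y,m)$, every subbasic set is $\mathcal{T}_{f_{2}}$-open, and closure of $\mathcal{T}_{f_{2}}$ under the subbase operations delivers $\mathcal{T}\subset\mathcal{T}_{f_{2}}$, completing the argument.
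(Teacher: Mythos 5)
Your proof is correct and follows essentially the same route as the paper's: the same verification of (f1) and (f2), the same identity $B_{f_{2}}(x,m)=g^{-1}B_{f_{1}}(g(x),m)$ for $\mathcal{T}_{f_{2}}\subset\mathcal{T}$, and the same choice of radius $m_{2}$ with $m_{1}+m_{2}=m$ for the converse, invoking Lemma \ref{generator set} in the same way. The only difference is that you spell out the two MVS order facts (left-monotonicity of $+$ with respect to $\lhd$, and the transitivity $a\unlhd b\lhd c\Rightarrow a\lhd c$ via (M3)) that the paper compresses into the single phrase ``by triangle inequality $f_{1}(y,g(z))\lhd m_{1}+m_{2}=m$''; this is a worthwhile clarification, not a deviation.
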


\begin{empt}\textbf{Relative topology.}\label{relative topology}
Let $X$ be a set with topology $\mathcal{T}$ induced by a quasimetric function $f_{1}\colon X\times X\rightarrow M$. Let $A\subset X$ and $i\colon A\hookrightarrow X$ be an inclusion. From the previous result we see that $f_{2}\colon A\times A\rightarrow M$, $f_{2}(a_{1},a_{2})=f_{1}(i(a_{1}),i(a_{2}))=f_{1}(a_{1},a_{2})$, is a quasimetric function, inducing on $A$ a topology $\mathcal{T}_{f_{2}}$, which is the same as the topology induced by $i$, i.e., the \textit{relative topology} on $A$. From the definition of $f_{2}$ we see that $f_{2}=f_{1}|_{A\times A}$. 

Furthermore we see that the property of being an $M$-metrizable space is a topological property. If $h\colon X\rightarrow Y$ is a homeomorphism and $Y$ is $M$-metrizable by a quasimetric function $f_{1}$, then we see that $X$ is $M$-metrizable by $f_{2}\colon X\times X\rightarrow M$ where $f_{2}(x_{1},x_{2})=f_{1}(h(x_{1}),h(x_{2}))$. 
\end{empt}

\begin{exmp}
Let $X$ be an uncountable set containing an element $x_{0}$ with the topology $\mathcal{T}$ a base of whose is formed by the one-point sets $\{x\}$, $x\neq x_{0}$, and those subsets of $X$ which contain $x_{0}$ and whose complement is finite, i.e., the example of a not MVS-metrizable space in [1, 1.12].

 Let $(M,+)=(\{0,1,2\},\max)$ and $\textbf{A}$ be the set whose elements are the sets $A\subset X$ such that $x_{0}\in A$ and $X\setminus A$ is finite. For every $A\in \textbf{A}$ we define a map $f_{A}\colon X\times X \rightarrow M$ by setting $f_{A}(x,y)=0$ if $x=y$ or $x,y\in A$ and $f_{A}(x,y)=2$ otherwise. 

The map $f=f_{A}$ is a quasimetric function. The condition (f2) holds by definition. Let $x,y,z\in X$. The triangle inequality $f(x,z)\unlhd f(x,y)+f(y,z)$ clearly holds if $f(x,z)=0$. Assume that $f(x,z)=2$. Then $x\notin A$ or $z\notin A$, and $x\neq y$ or $y\neq z$. Assume that $f(x,y)=0$. Then $x,y\in A$ or $x=y$. If $x,y\in A$, then $z\notin A$ and therefore $f(y,z)=2$. If $x=y$, then $z\neq y$ and $y=x\notin A$ or $z\notin A$, and thus $f(y,z)=2$. Hence $ f(x,y)+f(y,z)=\max\{f(x,y),f(y,z)\}=2$. Thus $f_{A}$ is a quasimetric function. It defines a topology $\mathcal{T}_{f_{A}}$ on $X$, whose base is formed by $A$ and the singletons $\{x\} $, $x\notin A$.

We can define a topology $\mathcal{T}'$ on $X$ induced by the maps $id_{A}\colon X\rightarrow (X,\mathcal{T}_{f_{A}})$, $x\mapsto x$, $A\in\textbf{A}$. The topology $\mathcal{T}'$ is generated by the sets $id_{A}^{-1}U=U$, $A\in \textbf{A}$, with $U$ belonging to the above base of $\mathcal{T}_{f_{A}}$. Thus a subbase of $\mathcal{T}'$ is $\textbf{A}\cup \bigcup_{x\neq x_{0}} \{\{x\}\}$. This subbase is actually the base of $\mathcal{T}$ above. Thus $\mathcal{T'}=\mathcal{T}$. Therefore the maps $id_{A}\colon X\rightarrow (X,\mathcal{T}_{f_{A}})$, $A\in \textbf{A}$, induce a topology on $X$ that is not MVS-metrizable, even though every $(X,\mathcal{T}_{f_{A}})$ is. Thus not every topological space induced by maps into MVS-metrizable spaces is itself MVS-metrizable. Whether there are counterexamples in the cases of finite or countably infinite number of inducing maps is an open question. However, if $M$ is an atom-free MVS, $X$ a topological space, $(Y_{i})_{i=1,...,n}$ a finite family of $M$-metrizable topological spaces, $g_{i}\colon X
\rightarrow Y_{i}$ a map for each $i=1,...,n$ and the topology of $X$ is induced by the family $(g_{i})_{i=1,..,n}$, then $X$ is $M$-metrizable. This follows from Theorems 
1.12 and 1.5: The topology of $X$ is induced by the map $g=(g_{1},...,g_{n})\colon X \rightarrow \prod_{i=1}^{n} Y_{i}$, since $g^{-1}\prod_{i=1}^{n} U_{i}=\bigcap_{i=1}^{n}
g_{i}^{-1}U_{i}$, when $U_{i}$ is open in $Y_{i}$ for each $i=1,...,n$.   
\end{exmp}

\begin{defn}
Let $X$ be a topological space and $M$ an MVS. If for every $x\in X$ there exists an open neighbourhood $U$ of $x$ whose relative topology is induced by a quasimetric function $f_{U}\colon U\times U\rightarrow M$, we say that $X$ is \textit{locally $M$-metrizable}.
 
Recall that a metacompact space is a topological space in which every open cover has an open refinement that is point-finite. 
\end{defn}

\begin{lmm}
\emph{Let $U=\{U_{i}:i\in I\}$ be an open cover of a topological space $(X,\mathcal{T})$. Let $\mathcal{T}_{U_{i}}$ be the relative topology of the set $U_{i}$ for every $i\in I$. Then for a set $V \subset X$ we have $V\in \mathcal{T}$ if and only if for every $x\in V$ and $i\in I$ with $U_{i}\ni x$ there exists $V_{i}\in \mathcal{T}_{U_{i}}$ such that $x\in V_{i}\subset V$.}
 
\begin{proof} 
If $V$ is open in $X$ and $x\in V$, then $V$ is a neighbourhood of $x$. If $x\in U_{i}$, then $U_{i}\cap V\subset V$ is a neighbourhood of $x$ which belongs to the relative topology $\mathcal{T}_{U_{i}}$.

If $V\subset X$ is not open, then there exists $x\in V$ such that every neighbourhood of $x$ intersects the complement of $V$. Let $i\in I$ be such that $x\in U_{i}$. Assume that there exists $V_{x}\in \mathcal{T}_{U_{i}}$ such that $x\in V_{x}\subset V$. Then $V_{x}$ is an open set in $X$ containing $x$. Hence $V_{x}$ is a neighbourhood of $x$ and thus contains at least one point that is not in $V$, which is a contradiction. Thus if $V\subset X$ is not open, then the condition does not hold.
\end{proof}
\end{lmm}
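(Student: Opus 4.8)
The plan is to reduce both directions to a single structural observation: because each $U_i$ is \emph{open} in $X$, a subset $W\subset U_i$ belongs to the relative topology $\mathcal{T}_{U_i}$ if and only if $W\in\mathcal{T}$. Indeed, if $W\in\mathcal{T}_{U_i}$ then $W=W'\cap U_i$ for some $W'\in\mathcal{T}$, and the intersection of two open sets is open in $X$; conversely, if $W\in\mathcal{T}$ and $W\subset U_i$, then $W=W\cap U_i$, so $W\in\mathcal{T}_{U_i}$. I would record this equivalence first, since it is the engine driving the whole argument.

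For the forward implication I would assume $V\in\mathcal{T}$. Given $x\in V$ and $i\in I$ with $x\in U_i$, I set $V_i=V\cap U_i$. Then $x\in V_i\subset V$ is immediate, and $V_i\in\mathcal{T}_{U_i}$ directly from the definition of the relative topology, so the condition holds.

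For the reverse implication I would assume the local condition and show that $V$ is open. For each $x\in V$, since the family $\{U_i:i\in I\}$ covers $X$ there is some $i$ with $x\in U_i$, and the hypothesis supplies a $V_i\in\mathcal{T}_{U_i}$ with $x\in V_i\subset V$. By the preliminary observation $V_i\in\mathcal{T}$, so every point of $V$ has an $X$-open neighbourhood contained in $V$. Writing $V$ as the union of these neighbourhoods over all $x\in V$ exhibits it as a union of sets in $\mathcal{T}$, whence $V\in\mathcal{T}$.

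I do not anticipate a serious obstacle; the only point needing care is the use of the openness of the cover members in the preliminary equivalence. Were the $U_i$ arbitrary subsets (for instance closed), relatively open sets need not be open in $X$, and the reverse implication would fail, so the argument genuinely relies on the cover being \emph{open}.
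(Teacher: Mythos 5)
Your proof is correct and rests on the same key fact as the paper's, namely that a relatively open subset of an open $U_{i}$ is itself open in $X$ (which you isolate explicitly, and the paper uses implicitly). The only difference is presentational: you establish the reverse implication directly by writing $V$ as a union of $X$-open neighbourhoods, whereas the paper argues by contrapositive from a point of $V$ witnessing non-openness; both arguments are sound and essentially identical in content.
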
 
\noindent
For the next theorem, recall $M_{\infty}$ from [1, 1.3.8].

\begin{thm} Let $M$ be an atom-free MVS and $(X,\mathcal{T})$ a metacompact locally $M$-metrizable topological space. Then $X$ is $M_{\infty}$-metrizable.
 \begin{proof} The $M$-metrizable open neighbourhoods $U_{x}$ of elements $x\in X$ form an open cover $\textbf{U}=\{U_{x}:x\in X\}$ of $X$. Let $f_{U_{x}}\colon U_{x}\times U_{x}\rightarrow M$ be a quasimetric function that defines the  relative topology on the neighbourhood $U_{x}$ for every $x\in X$. Now the topology $\mathcal{T}$ can be constructed from the relative topologies $\mathcal{T}_{f_{U_{x}}}$ as in the previous lemma.
 
Let $\textbf{V}=\{V_{j}:j\in J\} $ be a point-finite open refinement of $\textbf{U}$. The relative topology on $V_{j}$ can be induced by a quasimetric function $f_{V_{j}}\colon V_{j}\times V_{j}\rightarrow M$ for every $j\in J$. This is possible, because every $V_{j}$ is contained in some member $U_{x}$, $x\in X$, of $\textbf{U}$ and the inclusion $i\colon V_{j}\hookrightarrow U_{x}$ determines the relative topology on $V_{j}$ by the quasimetric function $f_{V_{j}}=f_{U_{x}}|_{V_{j}\times V_{j}}\colon V_{j}\times V_{j}\rightarrow M$. Furthermore  $\mathcal{T}_{f_{V_{j}}}$ is the relative topology also in terms of the whole space $X$.

For every $x\in X$, define the sets $J_{x}=\{j\in J:x\in V_{j}\}$ and
\[
W_{x}=\bigcap_{j\in J_{x}} V_{j}\ni x.
\]
The set $W_{x}$ is open, because it is a finite intersection of open sets. Furthermore, if $y\in W_{x}$, then $J_{x}\subset J_{y}$ and thus $W_{y}\subset W_{x}$. Now we can define a map $f\colon X\times X\rightarrow M_{\infty}$ in the following way:
\[
f(x,y)=\infty,\qquad \text{when } y\notin W_{x},
\]  
\[
f(x,y)=\sum_{j\in J_{x}}f_{V_{j}}(x,y),\qquad \text{when } y\in W_{x}.
\]
Notice that the sum is well-defined because $+$ is atom-free and hence commutative, and there is only a finite number of indices in $J_{x}$ for every $x\in X$. 
 
Clearly $f$ satisfies the condition (f2). Let $x,y,z\in X$. If $z\notin W_{x}$, then $y\notin W_{x}$ or $z\notin W_{y}$, because if neither holds, then $z\in W_{z}\subset W_{y}\subset W_{x}$, which is a contradiction. Thus we see that if $z\notin W_{x}$, then $f(x,y)=\infty$ or $f(y,z)=\infty$. Therefore the triangle inequality holds, when $z\notin W_{x}$.

Let $z\in W_{x}$. If $y\notin W_{x}$, then the triangle inequality holds. Let $y\in W_{x}$. If $z\notin W_{y}$, then the triangle inequality holds. Assume that $z\in W_{y}$. Then the triangle inequality holds in $W_{x}$ separately for every $f_{V_{j}}$; therefore from the commutativity of $+$ it follows that
\[
f(x,z)= \sum_{j\in J_{x}}f_{V_{j}}(x,z)\unlhd \sum_{j\in J_{x}}f_{V_{j}}(x,y)+ \sum_{j\in J_{x}}f_{V_{j}}(y,z).
\]   
Now $f(x,y)=\sum_{j\in J_{x}}f_{V_{j}}(x,y)$. Furthermore
\[
\sum_{j\in J_{x}}f_{V_{j}}(y,z)\unlhd \sum_{j\in J_{y}}f_{V_{j}}(y,z)=f(y,z).
\] 
These equations together imply that $f(x,z)\unlhd f(x,y)+f(y,z)$. Thus $f$ is a quasimetric function. Lastly we need to show that $\mathcal{T}_{f}=\mathcal{T}$.

Assume that $A\in \mathcal{T}$. Let $x\in A$. Now $x\in A\cap W_{x}\in \mathcal{T}$. Choose $j\in J_{x}$. Then $A\cap W_{x}\subset W_{x}\subset V_{j}$. Therefore $A\cap W_{x}\in \mathcal{T}_{f_{V_{j}}}$ and thus there exists $m_{j}\in M^{*}$ such that $B_{f_{V_{j}}}(x,m_{j})\subset A\cap W_{x}\subset A$. 

Let $y\in B_{f}(x,m_{j})$. Then $f(x,y)\lhd m_{j}$ and hence $f(x,y)\neq \infty$. Thus $y\in W_{x}\subset V_{j}$. Therefore $f_{V_{j}}(x,y)\unlhd f(x,y)\lhd m_{j}$. Thus $y\in B_{f_{V_{j}}}(x,m_{j})$ and therefore $y\in A$. Hence $A\in \mathcal{T}_{f}$. Thus $\mathcal{T}\subset \mathcal{T}_{f}$.

Let $x\in X$ and $m\in M_{\infty}^{*}$. Consider $B_{f}(x,m)\in \textbf{B}_{f}(x)$. If $m=\infty$, then $B_{f}(x,m)=X\in \mathcal{T}$. Assume that $m\neq \infty$. Let $n=\operatorname{card} J_{x}$. By atom-freeness of $M$ and [1, 2.10], choose $m_{n}\in M^{*}$ such that $n\cdot m_{n}\unlhd m$. Now the ball $B_{f_{V_{j}}}(x,m_{n})$ belongs to $ \mathcal{T}$ for every $j\in J_{x}$. Thus their intersection
\[
C=\bigcap\{B_{f_{V_{j}}}(x,m_{n}):j\in J_{x}\}
\]
also belongs to $\mathcal{T}$. Furthermore, if $y\in C$, then $f(x,y)\lhd n\cdot m_{n}\unlhd m$. Thus $C$ is a neighbourhood of $x$ (in $\mathcal{T})$ that is contained in the set $B_{f}(x,m)$. Therefore $\mathcal{T}_{f}\subset \mathcal{T}$ and thus $\mathcal{T}_{f}= \mathcal{T}$.  
\end{proof}
\end{thm}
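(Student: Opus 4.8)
The plan is to manufacture a single global quasimetric function $f\colon X\times X\to M_\infty$ by patching together the local ones, exploiting the atom-free hypothesis in two distinct ways: commutativity will make the patched sums unambiguous, and the refinement property of atoms (via [1, 2.10]) will let me recover the topology. First I would fix a workable cover: the $M$-metrizable open neighbourhoods form an open cover, and metacompactness yields a point-finite open refinement $\textbf{V}=\{V_j:j\in J\}$. Since each $V_j$ is contained in some $M$-metrizable $U_x$, restricting $f_{U_x}$ to $V_j\times V_j$ gives a quasimetric function $f_{V_j}$ inducing the relative topology on $V_j$, by the relative-topology observation following Theorem 1.5. For each $x$ I set $J_x=\{j:x\in V_j\}$, finite by point-finiteness, and $W_x=\bigcap_{j\in J_x}V_j$, an open neighbourhood of $x$. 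The one combinatorial fact I would isolate up front is the monotonicity $y\in W_x\Rightarrow J_x\subset J_y\Rightarrow W_y\subset W_x$.

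Next I would define $f(x,y)=\infty$ when $y\notin W_x$ and $f(x,y)=\sum_{j\in J_x}f_{V_j}(x,y)$ when $y\in W_x$, the finite sum being well defined by commutativity. Condition (f2) is immediate since $x\in W_x$ and each $f_{V_j}(x,x)=e$. For the triangle inequality I would split into cases by whether $z\in W_x$, $y\in W_x$, $z\in W_y$. Whenever $z\notin W_x$, one checks $y\notin W_x$ or $z\notin W_y$, so some summand on the right is $\infty$ and the inequality is trivial in $M_\infty$; the same triviality covers the cases $y\notin W_x$ and $z\notin W_y$. In the remaining case $z\in W_y\subset W_x$ all values are finite, and I would sum the coordinatewise inequalities $f_{V_j}(x,z)\unlhd f_{V_j}(x,y)+f_{V_j}(y,z)$ over $j\in J_x$, then use $J_x\subset J_y$ with monotonicity of $\unlhd$ under $+$ to bound $\sum_{j\in J_x}f_{V_j}(y,z)\unlhd\sum_{j\in J_y}f_{V_j}(y,z)=f(y,z)$, while $\sum_{j\in J_x}f_{V_j}(x,y)=f(x,y)$.

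Finally I would verify $\mathcal{T}_f=\mathcal{T}$. For $\mathcal{T}\subset\mathcal{T}_f$, given open $A$ and $x\in A$, the set $A\cap W_x$ is open in the relative topology of any fixed $V_j$ with $j\in J_x$, hence contains some $B_{f_{V_j}}(x,m_j)$; since $f(x,\cdot)\lhd m_j$ forces finiteness and so $y\in W_x\subset V_j$ with $f_{V_j}(x,y)\unlhd f(x,y)$, the $f$-ball $B_f(x,m_j)$ lands inside $A$. For $\mathcal{T}_f\subset\mathcal{T}$, the case $m=\infty$ gives $B_f(x,\infty)=X$, and for $m\neq\infty$ with $n=\operatorname{card}J_x$ I would use atom-freeness and [1, 2.10] to select $m_n\in M^{*}$ with $n\cdot m_n\unlhd m$; the finite $\mathcal{T}$-open intersection $C=\bigcap_{j\in J_x}B_{f_{V_j}}(x,m_n)$ forces $y\in W_x$ and $f(x,y)\lhd n\cdot m_n\unlhd m$, so $C\subset B_f(x,m)$ exhibits $B_f(x,m)$ as a $\mathcal{T}$-neighbourhood of $x$.

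I expect the triangle inequality to be the main obstacle, specifically the bookkeeping that passes from the $J_x$-sum to the $J_y$-sum on the middle-to-last leg: this is precisely where the asymmetry built into $W_x$ and the index inclusion $J_x\subset J_y$ must be reconciled, and it is the step that genuinely needs both commutativity and monotonicity of $\unlhd$ under addition.
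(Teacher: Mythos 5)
Your proposal is correct and follows essentially the same route as the paper's own proof: the same point-finite refinement, the same sets $J_{x}$ and $W_{x}$, the same definition of $f$ with value $\infty$ outside $W_{x}$, the same case analysis for the triangle inequality via $J_{x}\subset J_{y}$, and the same two-sided verification of $\mathcal{T}_{f}=\mathcal{T}$ using [1, 2.10]. No substantive differences to report.
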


\begin{rmrk} Even if the quasimetric functions $f_{U_{x}}$ are symmetrical, $f$ might not be, because it is possible that $y\in W_{x}$, but $x\notin W_{y}$. However, $f$ is strict (that is, $f(x,y)=e$ implies $x=y$), if every $f_{U_{x}}$ is strict.
\end{rmrk}

\begin{thm}[\textbf{Product of $M$-metrizable spaces}] Let $M$ be an atom-free $MVS$ and $\{X_{i}:i=1,2,...,n\}$ a finite collection of $M$-metrizable spaces. Then the product space $X=X_{1}\times ...\times X_{n}$ is $M$-metrizable.

\begin{proof}
For every $i\in \{1,2,...,n\}$ let $f_{i}\colon X_{i}\times X_{i}\rightarrow M$ be a quasimetric function that induces the topology of $X_{i}$. A neighbourhood base that defines the topology of $X$ is
\[
\textbf{B}(x)=\{B(x,(m_{1},...,m_{n})): m_{1},...,m_{n}\in M^{*} \},\qquad x\in X;
\]
\[
B(x,(m_{1},...,m_{n}))=\prod \{B_{f_{i}}(x_{i},m_{i}):i=1,...,n \},\qquad x=(x_{1},...,x_{n}).
\]
Define $f\colon X\times X\rightarrow M$, $f(x,y)=\sum_{i=1}^{n}f_{i}(x_{i},y_{i})$. Clearly $f$ satisfies the condition (f2). It also satisfies (f1), because $M$ is commutative and the quasimetric functions $f_{i}$ satisfy it. Thus $f$ is a quasimetric function. To prove the theorem, we show that $\textbf{B}$ and $\textbf{B}_{f}$ are equivalent.

Consider $B(x,(m_{1},...,m_{n}))\in\textbf{B}(x)$. Now there exists $m\in M^{*}$ such that $m\unlhd m_{i}$ for every $i=1,...,n$. Let $y\in B_{f}(x,m)$. Then
\[
f_{i}(x_{i},y_{i})\unlhd f(x,y)\lhd m\unlhd m_{i}
\]
for every $i=1,...,n$, and therefore $y\in B(x,(m_{1},...,m_{n}))$. Thus
\[
B_{f}(x,m)\subset B(x,(m_{1},...,m_{n})).
\]
Consider $B_{f}(x,m)\in \textbf{B}_{f}(x)$. By atom-freeness of $M$ we see that there exists $m'\in M^{*}$ such that $n\cdot m'\unlhd m$. Let $y\in B(x,(m',...,m'))$. Now
\[
f(x,y)=\sum_{i=1}^{n}f_{i}(x_{i},y_{i})\lhd n\cdot m'\unlhd m.
\]
Therefore $y\in B_{f}(x,m)$. Hence
\[
B(x,(m',...,m'))\subset B_{f}(x,m).
\]
The neighbourhood systems $\textbf{B}$ and $\textbf{B}_{f}$ are therefore equivalent, and thus $f$ induces the product topology on $X$. 
\end{proof}
\end{thm}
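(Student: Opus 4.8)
The plan is to use the coordinatewise sum of the given quasimetrics as the candidate on the product. Concretely, I would set $f(x,y)=\sum_{i=1}^{n}f_{i}(x_{i},y_{i})$ for $x=(x_{1},\dots,x_{n})$ and $y=(y_{1},\dots,y_{n})$; since $M$ is atom-free it is commutative, so this finite sum is well defined independently of the order of summation. Checking that $f$ is a quasimetric function is routine: (f2) holds because each $f_{i}(x_{i},x_{i})=e$ and $e+\dots+e=e$, while (f1) follows by adding the $n$ coordinate triangle inequalities $f_{i}(x_{i},z_{i})\unlhd f_{i}(x_{i},y_{i})+f_{i}(y_{i},z_{i})$ and then using associativity and commutativity to regroup the right-hand side as $f(x,y)+f(y,z)$.

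The substance of the argument is showing that $\mathcal{T}_{f}$ equals the product topology. I would describe the latter by its standard box neighbourhood base $\textbf{B}(x)$ with members $B(x,(m_{1},\dots,m_{n}))=\prod_{i=1}^{n}B_{f_{i}}(x_{i},m_{i})$, and then prove that $\textbf{B}$ and $\textbf{B}_{f}$ are equivalent neighbourhood systems, i.e. that each member of one contains a member of the other. For fitting a ball inside a given box, observe first that $f_{i}(x_{i},y_{i})\unlhd f(x,y)$ for each $i$ (pull the $i$-th summand to the front by commutativity); it then suffices to produce a single $m\in M^{*}$ with $m\unlhd m_{i}$ for all $i$, since $y\in B_{f}(x,m)$ would force $f_{i}(x_{i},y_{i})\unlhd f(x,y)\lhd m\unlhd m_{i}$, hence $f_{i}(x_{i},y_{i})\lhd m_{i}$ for every $i$. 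Such a common $\unlhd$-lower bound exists by iterating (M4): (M4) yields, for any two elements of $M^{*}$, a common $\unlhd$-predecessor in $M^{*}$, and an easy induction then handles the finite family $m_{1},\dots,m_{n}$.

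For the reverse inclusion — fitting a box inside a given ball $B_{f}(x,m)$ — the key is to split $m$ into $n$ equal comparable pieces, and this is exactly where atom-freeness enters. I would invoke [1, 2.10] to obtain $m'\in M^{*}$ with $n\cdot m'\unlhd m$. Then for $y$ in the box $B(x,(m',\dots,m'))$ each coordinate satisfies $f_{i}(x_{i},y_{i})\lhd m'$, so $f(x,y)=\sum_{i=1}^{n}f_{i}(x_{i},y_{i})\lhd n\cdot m'\unlhd m$, giving $y\in B_{f}(x,m)$. Both inclusions also use the mixed transitivity rules $a\unlhd b\lhd c\Rightarrow a\lhd c$ and $a\lhd b\unlhd c\Rightarrow a\lhd c$, which follow from (M3).

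I expect the main obstacle to be this second inclusion, and specifically the use of atom-freeness to divide $m$ into $n$ comparable parts: without it an atom $m$ (one admitting no $m'\in M^{*}$ with $m'\lhd m$) could force every box to be strictly larger than $B_{f}(x,m)$, breaking the equivalence. By comparison, the first inclusion and the verification of the quasimetric axioms are essentially mechanical.
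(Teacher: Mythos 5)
Your proposal is correct and matches the paper's proof essentially step for step: the same sum quasimetric $f(x,y)=\sum_{i}f_{i}(x_{i},y_{i})$, the same equivalence of the box neighbourhood base with $\textbf{B}_{f}$ via a common $\unlhd$-lower bound of $m_{1},\dots,m_{n}$ in one direction and an $m'$ with $n\cdot m'\unlhd m$ from atom-freeness in the other. The extra details you supply (iterating (M4) for the common lower bound, the mixed transitivity rules from (M3)) are correct and only make explicit what the paper leaves implicit.
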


\begin{rmrk}
The set  $M=\{0,1,2\}$ with operation $+$ for which $x+0=x=0+x$ if $x\in M$ and  $x+y=2$ if
 $x\ne 0\ne y$ is a commutative MVS. The topology $\mathcal{T}_{f_M}$ on $M$  induced by the canonical metric function $f_M$ of $M$ (see [1, 1.11.5]) is discrete, and  clearly every $M$-metrizable Hausdorff space is discrete. Hence, the product of an infinite family
 of copies of the space $(M,\mathcal{T}_{f_M})$ is not $M$-metrizable. This example was found by Jouni Luukkainen.
\end{rmrk}
\noindent
The next theorem also found by Jouni Luukkainen states that every Alexandrov space, i.e., finitely generated space, is $M$-metrizable with strict quasimetric function, where $M$ is atom-free.    

\begin{thm} Let $M = \{0, 1, 2\}$ be the commutative, even atom-free, MVS whose operation is $(m, n) \mapsto \max\{m, n\}$. Let $(X, \mathcal{T})$ be a topological space such that every point $x$ of $X$ has a smallest neighbourhood $U(x)$, which is thus open in $X$. Such is the case if, for example, $\mathcal{T}$ or even $X$ is finite. Define a map $f \colon X \times X \rightarrow M$ by setting $f(x, x) = 0$ for each $x \in X$, $f(x, y) = 1$ if $y \in U(x)$ but $y \neq x$ and $f(x, y) = 2$ if $y \notin U(x)$. Then $f$ is a strict quasimetric function on $X$ inducing the topology of $X$, i.e., $\mathcal{T} = \mathcal{T}_{f}$.
\end{thm}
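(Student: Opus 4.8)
The plan is to verify the three required properties in turn: that $f$ is strict, that $f$ is a quasimetric function (conditions (f1) and (f2)), and that $f$ induces the given topology. Condition (f2) is immediate from the definition, since $f(x,x)=0=e$. Strictness is equally immediate: the only way to get $f(x,y)=0$ is via the first clause $f(x,x)=0$, so $f(x,y)=e$ forces $x=y$. The substance of the proof lies in the triangle inequality and the topological equivalence.

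For the triangle inequality (f1), recall that in this MVS the operation is $\max$, so $\unlhd$ is just the usual order on $\{0,1,2\}$ and $m_1+m_2=\max\{m_1,m_2\}$. I would argue by cases on $f(x,z)$. If $f(x,z)=0$ then $x=z$ and the inequality holds trivially. If $f(x,z)=1$ then $z\in U(x)$, $z\neq x$; here I must show $\max\{f(x,y),f(y,z)\}\ge 1$, which holds unless both middle values are $0$, i.e.\ unless $x=y=z$, contradicting $z\neq x$. The genuinely nontrivial case is $f(x,z)=2$, i.e.\ $z\notin U(x)$: I need $\max\{f(x,y),f(y,z)\}=2$. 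Suppose for contradiction that $f(x,y)\le 1$ and $f(y,z)\le 1$. Then $y\in U(x)$ and $z\in U(y)$ (allowing equalities). The key observation is that $U(x)$ is the smallest neighbourhood of $x$ and is open, so $y\in U(x)$ together with $U(y)$ being the smallest neighbourhood of $y$ forces $U(y)\subset U(x)$ (because $U(x)$ is an open set containing $y$, hence a neighbourhood of $y$, hence contains the smallest one $U(y)$). Then $z\in U(y)\subset U(x)$ contradicts $z\notin U(x)$. This monotonicity of the smallest-neighbourhood assignment is the crux and the step I expect to need the most care.

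For the topology, I would show $\mathcal{T}=\mathcal{T}_f$ by identifying the open balls. The ball $B_f(x,m)$ for $m\in M^*=\{1,2\}$ is $\{y:f(x,y)\lhd m\}$, where $f(x,y)\lhd m$ means $f(x,y)<m$ in the order. Thus $B_f(x,1)=\{y:f(x,y)=0\}=\{x\}$ and $B_f(x,2)=\{y:f(x,y)\le 1\}=U(x)$. So the neighbourhood base $\textbf{B}_f(x)=\{\{x\},U(x)\}$. To prove $\mathcal{T}_f\subset\mathcal{T}$, note $U(x)$ is open in $\mathcal{T}$ by hypothesis and each singleton ball is harmless since $\textbf{B}_f(x)$ generates via its smallest member $\{x\}$; more precisely every $B_f(x,m)$ is open in $\mathcal{T}$. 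Conversely, to prove $\mathcal{T}\subset\mathcal{T}_f$, take any $\mathcal{T}$-open $A$ and $x\in A$; since $A$ is a neighbourhood of $x$ and $U(x)$ is the smallest neighbourhood, $U(x)\subset A$, and $U(x)=B_f(x,2)\in\textbf{B}_f(x)$, so $A$ is $\mathcal{T}_f$-open. The comparison of neighbourhood bases via the equivalence criterion in the Definition preceding the excerpt then yields $\mathcal{T}=\mathcal{T}_f$, completing the proof.
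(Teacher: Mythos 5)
Your verification of (f2), strictness, and the triangle inequality is fine and matches the paper's argument: the only nontrivial case is handled by the monotonicity $y\in U(x)\Rightarrow U(y)\subset U(x)$, exactly as in the paper. The problem is in your identification of the open balls. You assert that $f(x,y)\lhd m$ means $f(x,y)<m$ in the usual order, but that is false for this MVS: by definition $m_{1}\lhd m_{2}$ iff $\max\{m_{1},m_{3}\}=m_{2}$ for some $m_{3}\in M^{*}=\{1,2\}$, and since $\max$ is idempotent we have $1\lhd 1$ and $2\lhd 2$ (the paper explicitly flags ``note that $1\lhd 1$''). Hence $f(x,y)\lhd 1\Longleftrightarrow f(x,y)\leq 1$, so the correct balls are $B_{f}(x,1)=U(x)$ and $B_{f}(x,2)=X$, not $B_{f}(x,1)=\{x\}$ and $B_{f}(x,2)=U(x)$ as you claim.

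This is not a cosmetic slip: with your computation the neighbourhood base at $x$ would contain the singleton $\{x\}$, so $\mathcal{T}_{f}$ would be the discrete topology, and your assertion that ``every $B_{f}(x,m)$ is open in $\mathcal{T}$'' fails whenever some singleton is not open --- e.g.\ in the Sierpi\'{n}ski space $X=\{a,b\}$ with open sets $\emptyset,\{a\},X$, where $U(b)=X$ but $\{b\}\notin\mathcal{T}$. The phrase ``each singleton ball is harmless'' does not repair this; a basic neighbourhood $\{x\}$ of $x$ forces $\{x\}\in\mathcal{T}_{f}$. Once the balls are computed correctly, the argument becomes the paper's: $U(x)=B_{f}(x,1)$, and $U\in\mathcal{T}\Longleftrightarrow U=\bigcup_{x\in U}U(x)=\bigcup_{x\in U}B_{f}(x,1)\Longleftrightarrow U\in\mathcal{T}_{f}$. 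Your concluding step ($A$ open, $x\in A$ implies $U(x)\subset A$) is correct and survives, but the direction $\mathcal{T}_{f}\subset\mathcal{T}$ needs the corrected ball computation, under which both basic balls $U(x)$ and $X$ are genuinely open in $\mathcal{T}$.
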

\begin{proof}
 First, $f$ is a quasimetric function, because if $x, y, z \in X$ with $x \neq y \neq z \neq x$ and $f(x, y) = f(y, z) = 1$,
then $y \in U(x)$ implying $U(y) \subset U(x)$ and $z \in U(y)$, whence $z \in U(x)$ and thus $f(x, z) = 1$. Consider $x \in X$. Then for each $y \in X$ with $y \neq x$ we have (note that $1 \lhd 1$)
\[
y \in B_{f} (x, 1) \Longleftrightarrow f(x, y) = 1 \Longleftrightarrow y \in U(x).
\]
Thus $U(x) = B_{f} (x, 1)$.

Finally, for a subset $U$ of $X$ we have
\[
U \in \mathcal{T} \Longleftrightarrow U =\bigcup_{x\in U} U(x) \Longleftrightarrow U = \bigcup_{x\in U} B_{f} (x, 1) \Longleftrightarrow U \in \mathcal{T}_{f} .
\]
Thus, $\mathcal{T} = \mathcal{T}_{f}$. 
\end{proof}

\section{Characterization results}
We want to characterize $MVS$-metrizable topological spaces. When an MVS $M$ is commutative, this can be done by using quasiuniform spaces.

\begin{nttn}
When $f\colon X\times X\rightarrow M$ is a metric function and $x\in X$, $m\in M^{*}$, we denote
\[
\bar{B}_{f}(x,m)=\{y\in X:f(x,y)\unlhd m\}
\]
and
\[
\bar{\textbf{B}}_{f}(x)=\{\bar{B}_{f}(x,m):m\in M^{*} \}.
\]
We denote the map that is defined by the collections $\bar{\textbf{B}}_{f}(x)$, $x\in X$, by $\bar{\textbf{B}}_{f}\colon X\rightarrow \mathcal{P}(\mathcal{P}(X)).$
\end{nttn}

\begin{thm} Let $M$ be an atom-free MVS and $f\colon X\times X\rightarrow M$ a quasimetric function. Then $\bar{\textbf{B}}_{f}$ is a neighbourhood system that is equivalent to the neighbourhood system $\textbf{B}_{f}$.
\end{thm}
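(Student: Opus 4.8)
The plan is to prove two things: that $\bar{\textbf{B}}_{f}$ satisfies the neighbourhood-system axioms (B1)--(B3), and that it is equivalent to $\textbf{B}_{f}$ in the sense of the definition preceding this theorem. Throughout I would freely use that $\unlhd$ is transitive (if $m_{1}\unlhd m_{2}$ and $m_{2}\unlhd m_{3}$, then $m_{1}\unlhd m_{3}$, by associativity) and that $\unlhd$ is additive in the sense that $a\unlhd b$ and $c\unlhd d$ imply $a+c\unlhd b+d$ (recombining the witnesses via commutativity and associativity). Commutativity is available because an atom-free $M$ is by definition commutative.

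For the axioms, (B1) is immediate: $f(x,x)=e\unlhd m$ gives $x\in\bar{B}_{f}(x,m)$ for every $m\in M^{*}$. For (B2), given $\bar{B}_{f}(x,m_{1})$ and $\bar{B}_{f}(x,m_{2})$, I would apply (M4) to obtain $m_{3}\in M^{*}$ with $m_{3}\unlhd m_{1}$ and $m_{3}\unlhd m_{2}$; transitivity of $\unlhd$ then yields $\bar{B}_{f}(x,m_{3})\subset \bar{B}_{f}(x,m_{1})\cap\bar{B}_{f}(x,m_{2})$, so $W=\bar{B}_{f}(x,m_{3})$ works.

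The main work is (B3), which I expect to be the delicate point. Here I would use atom-freeness to split the radius: for $m\in M^{*}$ there is $n\in M^{*}$ with $n\lhd m$, i.e. $n+p=m$ for some $p\in M^{*}$. Set $V=\bar{B}_{f}(x,n)$. Given $y\in V$, so $f(x,y)\unlhd n$, take $W=\bar{B}_{f}(y,p)$; then for any $z\in W$, so $f(y,z)\unlhd p$, the triangle inequality together with additivity of $\unlhd$ gives $f(x,z)\unlhd f(x,y)+f(y,z)\unlhd n+p=m$, whence $z\in\bar{B}_{f}(x,m)$. Thus $W\subset\bar{B}_{f}(x,m)$, which is (B3). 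This is exactly the step where both atom-freeness (to obtain a genuine two-piece split into nonzero radii) and commutativity (to combine the two radius bounds additively through the triangle inequality) are essential.

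Finally, for equivalence I would compare the two families of balls directly. In one direction $B_{f}(x,m)\subset\bar{B}_{f}(x,m)$ is immediate because $\lhd$ implies $\unlhd$. In the other direction, given $B_{f}(x,m)$, I again use atom-freeness to write $n+p=m$ with $n,p\in M^{*}$ and claim $\bar{B}_{f}(x,n)\subset B_{f}(x,m)$: if $f(x,y)\unlhd n$, say $f(x,y)+q=n$, then $f(x,y)+(q+p)=m$, where $q+p\in M^{*}$ since $q+p=e$ would force $p=e$ by (M3), a contradiction; hence $f(x,y)\lhd m$. Both refinement directions then hold, so $\bar{\textbf{B}}_{f}$ and $\textbf{B}_{f}$ are equivalent and define the same topology.
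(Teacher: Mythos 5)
Your proposal is correct and follows essentially the same route as the paper: verify (B1) from $e\unlhd m$, (B2) from (M4), (B3) by splitting the radius into two nonzero pieces and applying the triangle inequality, and establish equivalence via $B_{f}(x,m)\subset\bar{B}_{f}(x,m)$ together with $\bar{B}_{f}(x,n)\subset B_{f}(x,m)$ for $n\lhd m$. The only cosmetic difference is that for (B3) the paper takes a symmetric split $m'+m'\unlhd m$ while you take $n+p=m$ directly from the definition of atom-freeness; both work.
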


 \begin{proof}
  Let $x\in X$. First, $B_{f}(x,m)\subset \bar{B}_{f}(x,m)$, if $m\in M^{*}$. 
  
  Let $m\in M^{*}$. Then by atom-freeness we can choose $m'\in M^{*}$ such that $m'\lhd m$. Thus $\bar{B}_{f}(x,m')\subset B_{f}(x,m)$.
  
  The map $\bar{\textbf{B}}_{f}$ satisfies the condition (B1), because $e\unlhd m$ for every $m\in M^{*}$. The condition (B2) follows from (M4).

  Let $U=\bar{B}_{f}(x,m)\in \bar{\textbf{B}}_{f}(x)$ and $m'\in M^{*}$ be such that $m'+m'\unlhd m$. Let $V=\bar{B}_{f}
  (x,m')\in \bar{\textbf{B}}_{f}(x)$. Let $y\in V$. Then $\bar{B}_{f}(y,m')$ is contained in $U$ due to the triangle 
  inequality. Therefore (B3) holds. Thus $\bar{\textbf{B}}_{f}$ is a neighbourhood system and it is equivalent to 
  $\textbf{B}.$ 
  \end{proof}
  
  \begin{rmrk}
  Notice that $\bar{\textbf{B}}_{f}$ is not necessarily an open neighbourhood system. For example, let $(X,d)$ be a metric space, $x\in X$ and $r\in ]0,\infty[$. Then the set $\bar{B}_{d}(x,r)$ is the closed ball $\{y\in X:d(x,y)\leq r\}$, which need not be an open set.
  \end{rmrk}
  
  \begin{empt}\textbf{Uniform spaces}.
  Recall the following definitions. Let $X$ be a set and $A,B\subset X\times X$. Define the set $A\circ B\subset X\times X$ by setting $(x,z)\in A\circ B$ if $x,z\in X$ and there exists $y\in X$ such that $(x,y)\in B$ and $(y,z)\in A$.
  
  Let $X$ be a set and $\mathcal{U}$ a non-empty family of subsets of $X\times X$. The pair $(X,\mathcal{U})$ is a \textit{uniform space} and $\mathcal{U}$ a \emph{uniformity} if $\mathcal{U}$ satisfies the following conditions:
  \\
  (U1) If $U\in \mathcal{U}$, then $U$ contains the \textit{diagonal} $\Delta=\{(x,x):x\in X\}$ of $X$.
  \\
  (U2) If $U\in \mathcal{U}$ and $U\subset V\subset X\times X$, then $V\in \mathcal{U}$.
   \\
  (U3) If $U,V\in \mathcal{U}$, then $U\cap V\in \mathcal{U}$.
   \\
  (U4) If $U\in \mathcal{U}$, then there exists $V\in \mathcal{U}$ such that $V\circ V\subset U$.
   \\
  (U5) If $U\in \mathcal{U}$, then $U^{-1}=\{(y,x):(x,y)\in U\}\in \mathcal{U}$.
  \\
  If the last condition is omitted, then $(X,\mathcal{U})$ is a \textit{quasiuniform space} and $\mathcal{U}$  a \emph{quasiuniformity.}
  
  Let $(X,\mathcal{U})$ be a quasiuniform space. A collection $\mathcal{U}_{\textbf{B}}\subset \mathcal{P}(X\times X)$ is a \emph{base of the quasiuniform space} $X$, if $\mathcal{U}_{\textbf{B}}\subset \mathcal{U}$ and for every $U\in \mathcal{U}$ there exists $U_{B}\in \mathcal{U}_{\textbf{B}}$ such that $U_{B}\subset U$. 
  
The base defines the quasiuniform space uniquely: a set $U\subset X\times X$ belongs to $\mathcal{U}$ if and only if there exists $U_{B}\in \mathcal{U_{B}}$ that is contained in $U$.
\end{empt}

\begin{thm}
Let $X$ be a set and $\mathcal{U}_{\textbf{B}}$ a non-empty collection of subsets of $X\times X$. Then $\mathcal{U}_{\textbf{B}}$ is a base of some quasiuniform space if and only if it satisfies the following conditions:
\\
\\
\emph{(UB1)} $\Delta \subset U$ for every $U\in \mathcal{U}_{\textbf{B}}$. 
\\
\emph{(UB2)} If $U,V\in \mathcal{U}_{\textbf{B}}$, then there exists $W\in \mathcal{U}_{\textbf{B}}$ such that $W\subset U\cap V$.
\\
\emph{(UB3)} If $U\in \mathcal{U}_{\textbf{B}}$, then there exists $V\in\mathcal{U}_{\textbf{B}}$ such that $V\circ V\subset U$.
\end{thm}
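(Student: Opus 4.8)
The plan is to prove both directions of the equivalence. The forward direction is nearly immediate: if $\mathcal{U}_{\textbf{B}}$ is a base of some quasiuniform space $(X,\mathcal{U})$, then I would verify each of (UB1)--(UB3) by pulling the corresponding axiom down from $\mathcal{U}$ to the base. For (UB1), every $U\in\mathcal{U}_{\textbf{B}}\subset\mathcal{U}$ contains $\Delta$ by (U1). For (UB2), given $U,V\in\mathcal{U}_{\textbf{B}}$, axiom (U3) gives $U\cap V\in\mathcal{U}$, so by the defining property of a base there is $W\in\mathcal{U}_{\textbf{B}}$ with $W\subset U\cap V$. For (UB3), given $U\in\mathcal{U}_{\textbf{B}}$, axiom (U4) supplies some $V'\in\mathcal{U}$ with $V'\circ V'\subset U$, and then the base property gives $V\in\mathcal{U}_{\textbf{B}}$ with $V\subset V'$, whence $V\circ V\subset V'\circ V'\subset U$ (using that $\circ$ is monotone in each argument).

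For the converse, the task is to manufacture a quasiuniformity out of a collection satisfying (UB1)--(UB3). The natural construction is to take the upward closure
\[
\mathcal{U}=\{U\subset X\times X: \text{there exists } U_{B}\in\mathcal{U}_{\textbf{B}} \text{ with } U_{B}\subset U\}.
\]
I would then check that $\mathcal{U}$ is a quasiuniformity and that $\mathcal{U}_{\textbf{B}}$ is indeed a base for it. The base property is built into the definition, and (U2) holds trivially by construction. For (U1), any $U\in\mathcal{U}$ contains some $U_{B}\in\mathcal{U}_{\textbf{B}}$, and $U_{B}\supset\Delta$ by (UB1), so $U\supset\Delta$. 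For (U3), given $U,V\in\mathcal{U}$ pick $U_{B}\subset U$ and $V_{B}\subset V$ in the base; by (UB2) there is $W\in\mathcal{U}_{\textbf{B}}$ with $W\subset U_{B}\cap V_{B}\subset U\cap V$, so $U\cap V\in\mathcal{U}$.

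The one step requiring slightly more care is (U4). Given $U\in\mathcal{U}$, choose $U_{B}\in\mathcal{U}_{\textbf{B}}$ with $U_{B}\subset U$; by (UB3) there is $V_{B}\in\mathcal{U}_{\textbf{B}}$ with $V_{B}\circ V_{B}\subset U_{B}\subset U$. Taking $V=V_{B}\in\mathcal{U}$ (since $V_{B}$ contains itself) gives $V\circ V\subset U$, as needed. I also need $\mathcal{U}$ to be non-empty, which follows because $\mathcal{U}_{\textbf{B}}$ is non-empty and $\mathcal{U}_{\textbf{B}}\subset\mathcal{U}$. I expect the main (though still mild) obstacle to be making sure the composition operator $\circ$ behaves monotonically with respect to inclusion; since the proof of (UB3) in the forward direction and (U4) in the converse both rely on $A\subset A'$ and $B\subset B'$ implying $A\circ B\subset A'\circ B'$, I would state this monotonicity once explicitly from the definition of $\circ$ and reuse it in both places.
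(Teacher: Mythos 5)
Your proposal is correct and follows essentially the same route as the paper: verify (UB1)--(UB3) by pulling the axioms down through the base property, and for the converse take the upward closure of $\mathcal{U}_{\textbf{B}}$ and check (U1)--(U4). The only difference is that you spell out the monotonicity of $\circ$ that the paper leaves implicit in the steps for (UB3) and (U4), which is a harmless (and welcome) addition.
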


\begin{proof}
First assume that $\mathcal{U}_{\textbf{B}}$ is a base of a quasiuniformity $\mathcal{U}$. The condition (UB1) clearly holds. Let $U,V\in \mathcal{U}_{\textbf{B}}$. Then $U,V\in \mathcal{U}$, and therefore $U\cap V \in \mathcal{U}$. Thus there exists $W\in \mathcal{U}_{\textbf{B}}$ that is contained in $U\cap V$. Therefore (UB2) holds. The condition (UB3) follows easily from (U4).

Let $\mathcal{U}_{\textbf{B}}$ be a collection satisfying (UB1)--(UB3). Let 
\[
\mathcal{U}=\{U:U\subset X\times X, U_{B}\subset U \text{ for some } U_{B}\in \mathcal{U}_{\textbf{B}}\}.
\]
We will show that $\mathcal{U}$ is a quasiuniformity. The condition (U1) follows from (UB1) and (U3) from (UB2). The condition (U2) holds, because every $U\in \mathcal{U}$ contains some $W\in \mathcal{U}_{\textbf{B}}$ and if $V\supset U$, then $V$ contains $W$ and is therefore a member of $\mathcal{U}$. The condition (U4) is satisfied by (UB3). Now $\mathcal{U}_{\textbf{B}}$ is a base of $(X,\mathcal{U})$ by the definition of $\mathcal{U}$.
\end{proof}

\begin{empt} \textbf{The topology induced by a quasiuniform space.}
Let $(X,\mathcal{U})$ be a quasiuniform space and $\mathcal{U}_{\textbf{B}}$ a base of $\mathcal{U}$. For $x\in X$ and $U\in \mathcal{U}_{\textbf{B}}$ let
\[
U[x]=\{y\in X:(x,y)\in U\}.
\]
For $x\in X$ define a non-empty family
\[
\textbf{B}_{\mathcal{U}_{\textbf{B}}}(x)=\{U[x]:U\in \mathcal{U}_{\textbf{B}}\}
\]
of subsets of $X$.

Now we show that $\textbf{B}_{\mathcal{U}_{\textbf{B}}}$ is a neighbourhood system on $X$. The condition (B1) holds, because $(x,x)\in U$ and thus $x\in U[x]$ for every $U\in \mathcal{U}$ due to (UB1). Let $U[x],V[x]\in \textbf{B}_{\mathcal{U}_{\textbf{B}}}(x)$. Due to (UB2), there exists $W\in \mathcal{U}_{\textbf{B}}$ such that $W\subset U\cap V$. Therefore $W[x]\subset U[x]\cap V[x]$. Thus (B2) holds.

Let $U[x]\in \textbf{B}_{\mathcal{U}_{\textbf{B}}}(x)$. Due to (UB3) there exists $V\in \mathcal{U}_{\textbf{B}}$ such that $V\circ V\subset U$. Let $y\in V[x]$ and $z\in V[y]$. Then $(x,y)\in V$ and $(y,z)\in V$. Hence $(x,z)\in V\circ V\subset U$ and thus $z\in U[x]$. Therefore (B3) holds.

We denote the topology defined by the neighbourhood system $\textbf{B}_{\mathcal{U}_{\textbf{B}}}$ by $\mathcal{T}
_{\mathcal{U}_{\textbf{B}}}$ and call it the \emph{topology induced by the base $\mathcal{U}_{\textbf{B}}$}
of the quasiuniform space $(X,\mathcal{U})$. 
If $(X,\mathcal{T})$ is a topological space with $\mathcal{T}=\mathcal{T}_{\mathcal{U}_{\textbf{B}}}$, we say that the quasiuniformity that $\mathcal{U}_{\textbf{B}}$ induces is compatible with $\mathcal{T}$. 
\end{empt}

\begin{lmm}
\textit{If $\mathcal{U}_{\textbf{B}_{1}}$ and $\mathcal{U}_{\textbf{B}_{2}}$ are bases of a quasiuniform space $(X,
\mathcal{U})$, then $\mathcal{T}_{\mathcal{U}_{\textbf{B}_{1}}}=\mathcal{T}_{\mathcal{U}_{\textbf{B}_{2}}}$.} $\square$
\end{lmm}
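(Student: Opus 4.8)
The plan is to show that the two neighbourhood systems $\textbf{B}_{\mathcal{U}_{\textbf{B}_{1}}}$ and $\textbf{B}_{\mathcal{U}_{\textbf{B}_{2}}}$ are equivalent in the sense defined earlier in the paper. Since equivalent neighbourhood systems define the same topology, this will immediately yield $\mathcal{T}_{\mathcal{U}_{\textbf{B}_{1}}}=\mathcal{T}_{\mathcal{U}_{\textbf{B}_{2}}}$. By the equivalence criterion it suffices to verify that for every $x\in X$ and every basic neighbourhood of $x$ in one system there is a smaller one in the other, and conversely.

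The single observation driving the argument is that set inclusion in $X\times X$ passes to the slices: if $A\subset B\subset X\times X$, then $A[x]\subset B[x]$ for every $x$, since $(x,y)\in A$ forces $(x,y)\in B$ and hence $y\in A[x]$ forces $y\in B[x]$. With this in hand, I would fix $x\in X$ and take an arbitrary $U_{1}[x]\in \textbf{B}_{\mathcal{U}_{\textbf{B}_{1}}}(x)$ with $U_{1}\in \mathcal{U}_{\textbf{B}_{1}}$. Because $\mathcal{U}_{\textbf{B}_{1}}\subset \mathcal{U}$, we have $U_{1}\in \mathcal{U}$, and because $\mathcal{U}_{\textbf{B}_{2}}$ is a base of $\mathcal{U}$, there exists $U_{2}\in \mathcal{U}_{\textbf{B}_{2}}$ with $U_{2}\subset U_{1}$. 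The slice observation then gives $U_{2}[x]\subset U_{1}[x]$, and $U_{2}[x]\in \textbf{B}_{\mathcal{U}_{\textbf{B}_{2}}}(x)$, which is exactly the required smaller neighbourhood. Interchanging the roles of $\mathcal{U}_{\textbf{B}_{1}}$ and $\mathcal{U}_{\textbf{B}_{2}}$ yields the converse, so the two systems are equivalent and the claim follows.

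There is essentially no hard step here: the result is a direct consequence of the definition of a base of a quasiuniform space together with the equivalence criterion for neighbourhood systems. The only point requiring care is keeping the direction of the inclusions straight when passing from subsets of $X\times X$ to the slices $U[x]$, and remembering to invoke the base property in both directions so that both halves of the equivalence are established.
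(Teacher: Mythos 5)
Your proof is correct: it verifies the equivalence of the neighbourhood systems $\textbf{B}_{\mathcal{U}_{\textbf{B}_{1}}}$ and $\textbf{B}_{\mathcal{U}_{\textbf{B}_{2}}}$ directly from the definition of a base together with the slice observation $A\subset B\Rightarrow A[x]\subset B[x]$, which is exactly the routine argument the paper leaves implicit (the lemma is stated with the proof omitted). No gaps.
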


\begin{empt}\label{M as Q}\textbf{$M$-metrizable spaces as quasiuniform spaces.}
Let $M$ be an atom-free MVS and $f\colon X\times X\rightarrow M$ a quasimetric function. Define the set
\[
U_{m}=\{(x,y)\in X\times X:f(x,y)\unlhd m\} 
\]
for every  $m\in M^{*}$. Now the collection $\mathcal{U}_{M^{*}}=\{U_{m}:m\in M^{*}\}$ is a base of a quasiuniform space. The condition (UB1) holds, since $f(x,x)=e\unlhd m$ for every $x\in X$ and $m\in M^{*}$. Let $m_{1},m_{2}\in M^{*}$. Choose $m_{3}\in M^{*}$ such that $m_{3}\unlhd m_{1},m_{2}$. Now $U_{m_{3}}\subset U_{m_{1}}\cap U_{m_{2}}$. Therefore (UB2) holds.

Let $m_{1}\in M^{*}$. Due to atom-freeness there exists $m_{2}\in M^{*}$ such that $m_{2}+m_{2}\unlhd m_{1}$. Let $(x,z)\in U_{m_{2}}\circ U_{m_{2}}$. Then there exists $y\in X$ such that $(x,y)\in U_{m_{2}}$ and $(y,z)\in U_{m_{2}}$. Therefore $f(x,y)\unlhd m_{2}$ and $f(y,z)\unlhd m_{2}$, and from the triangle inequality it follows that $f(x,z)\unlhd m_{2}+m_{2}\unlhd m_{1}$. Hence $U_{m_{2}}\circ U_{m_{2}}\subset U_{m_{1}}$. Thus (UB3) holds and $\mathcal{U}_{M^{*}}$ is a base of some quasiuniform space $(X,\mathcal{U})$. 

We now show that the topology $\mathcal{T}_{\mathcal{U}_{M^{*}}}$ induced by $\mathcal{U}_{M^{*}}$ is the same as the topology $\mathcal{T}_{f}$ induced by $f$. This follows from the fact that the neighbourhood systems $\bar{\textbf{B}}_{f}$ and $\textbf{B}_{\mathcal{U}_{M^{*}}}$ are the same, because
\[
U_{m}[x]=\{y\in X:f(x,y)\unlhd m\} = \bar{B}_{f}(x,m). \qquad\square
\]

\noindent
We also notice that if $f$ is symmetrical, then $U_{m}=U_{m}^{-1}$ for every $m\in M^{*}$ implying that $\mathcal{U}_{M^{*}}$ is a base of a uniformity.
\end{empt}
\begin{defns}
Let $M$ be a commutative MVS. We need two new concepts to characterize all $M$-metrizable spaces in terms of bases of quasiuniform spaces.

We say that an $M$-metrizable space $(X,f)$ is a \emph{full $M$-space} and that $f$ is \emph{$M$-full}, if $f$ is a surjection. If $(X,\mathcal{T})$ is a topological space whose topology is given by some surjective quasimetric function $f\colon X\times X\rightarrow M$, then we say that $(X,\mathcal{T})$ or shortly $X$ is an \emph{$M$-full space}. Notice that there might be two different quasimetric functions that induce the same topology such that only one of them is $M$-full.

Let $f\colon X\times X\rightarrow M$ be a quasimetric function which satisfies the following condition:
\\
(C) If $m_{1},m_{2},m_{3}\in M$ are such that $m_{2}+m_{3}=m_{1}$ and $x,y\in X$ such that $f(x,y)=m_{1}$, then there exists $z\in X$ such that $f(x,z)=m_{2}$ and $f(z,y)=m_{3}$.

Then we say that $(X,f)$ is a \emph{convex $M$-space} and that $f$ is \emph{$M$-convex}. If $(X,\mathcal{T})$ is a topological space whose topology is given by some $M$-convex $f$, then we say that $(X,\mathcal{T})$ or shortly $X$ is an \emph{$M$-convex space}. 
\end{defns}
 
 \begin{lmm} \textit{Let $M$ be a commutative MVS. Then every $M$-space can be embedded into an $M$-full space.}
 \begin{proof}
 An empty $M$-space can be embedded into the example space $(M,f_{M})$ in [1, 1.11.5]. Let $(X,f)$ then be a non-empty $M$-space. We form a product set $X\times M$ and a map
 \[
 f_{*}\colon (X\times M)\times(X\times M)\rightarrow M
 \]
 such that $f_{*}((x,m),(x,m))=e_{M}$ for every $x\in X$, $m\in M$, and
 \[
 f_{*}((x_{1},m_{1}),(x_{2},m_{2}))=m_{1}+f(x_{1},x_{2})+m_{2},
 \]
 when $x_{1}\neq x_{2}$ or $m_{1}\neq m_{2}$.
 
 Now $f_{*}$ clearly satisfies the condition (f2). Let $(x_{1},m_{1}),(x_{2},m_{2}),(x_{3},m_{3})\in X\times M$. Let $(x_{1},m_{1})\neq (x_{3},m_{3})$. Then
 \begin{align*}
  f_{*}((x_{1},m_{1}),(x_{3},m_{3}))&= m_{1}+f(x_{1},x_{3})+m_{3}
\\
&\unlhd m_{1}+(f(x_{1},x_{2})+f(x_{2},x_{3}))+m_{3}
\\
&\unlhd f_{*}((x_{1},m_{1}),(x_{2},m_{2}))+f_{*}((x_{2},m_{2}),(x_{3},m_{3})),
 \end{align*}
 which can be checked straightforwardly. Therefore $f_{*}$ satisfies the triangle inequality and is thus a quasimetric function. Furthermore if $m\in M$ and we choose $x\in X$, then $f_{*}((x,m),(x,e_{M}))=m$. Therefore $f_{*}$ is $M$-full.
When $X$ and $X\times \{e_{M}\}$ are identified, then the restriction of $f_{*}$ to $X$ is $f$. Therefore the inclusion $X\hookrightarrow X\times M$ is an embedding of $X$ into an $M$-full space $X\times M$.  
\end{proof}   
\end{lmm}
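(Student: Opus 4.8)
The plan is to enlarge the given $M$-space by adjoining to it a full copy of $M$ worth of ``pendant'' points whose distances realize every value of $M$, while leaving the distances inside the original space untouched. The empty case is degenerate, so I would dispose of it first by embedding $\emptyset$ into the canonical example space $(M, f_{M})$ of [1, 1.11.5], which is visibly $M$-full. So assume $X \neq \emptyset$.

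For a non-empty $X$ I would form the product set $X \times M$ and define $f_{*}\colon (X\times M)\times(X\times M) \to M$ by declaring $f_{*}$ to be $e$ on the diagonal and
\[
f_{*}((x_{1},m_{1}),(x_{2},m_{2})) = m_{1} + f(x_{1},x_{2}) + m_{2}
\]
on every off-diagonal pair; this makes unambiguous sense because $M$ is commutative. Two facts are then immediate. First, identifying $X$ with $X \times \{e\}$, the restriction of $f_{*}$ to this slice is $f$, since $e + f(x_{1},x_{2}) + e = f(x_{1},x_{2})$ and the diagonal clause gives $e$. Second, $f_{*}$ is surjective: fixing any $x \in X$ (here I use $X \neq \emptyset$), for every $m \in M$ one has $f_{*}((x,m),(x,e)) = m + e + e = m$, the value $m = e$ being covered by the diagonal clause.

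It remains to check that $f_{*}$ is a quasimetric function; (f2) holds by the diagonal clause, so the real content is the triangle inequality (f1), and this is where I expect the only genuine work. For a triple with distinct first and last entry I would start from $f(x_{1},x_{3}) \unlhd f(x_{1},x_{2}) + f(x_{2},x_{3})$ and use that $\unlhd$ is translation invariant in a commutative MVS (if $p \unlhd q$ then $a + p + b \unlhd a + q + b$, by moving the witness to the end) to obtain
\[
f_{*}((x_{1},m_{1}),(x_{3},m_{3})) \unlhd m_{1} + f(x_{1},x_{2}) + f(x_{2},x_{3}) + m_{3}.
\]
When both midpoint legs are off-diagonal, the sum $f_{*}((x_{1},m_{1}),(x_{2},m_{2})) + f_{*}((x_{2},m_{2}),(x_{3},m_{3}))$ equals $m_{1} + f(x_{1},x_{2}) + m_{2} + m_{2} + f(x_{2},x_{3}) + m_{3}$, which dominates the previous expression because it merely carries the extra summand $m_{2} + m_{2}$: commutativity lets me slide that summand to the end and read off the required $\unlhd$. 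The fiddly part, and the main obstacle, is the bookkeeping of the degenerate subcases where $(x_{1},m_{1}) = (x_{2},m_{2})$, where $(x_{2},m_{2}) = (x_{3},m_{3})$, or where $(x_{1},m_{1}) = (x_{3},m_{3})$: in each, the diagonal clause replaces one term by $e$, and one checks directly, using $e \unlhd m$ for all $m$ together with reflexivity of $\unlhd$, that the inequality survives.

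Finally, since $f_{*}|_{(X\times\{e\})\times(X\times\{e\})} = f$, the relative-topology observation of \ref{relative topology} shows that the inclusion $X \hookrightarrow X \times M$, $x \mapsto (x,e)$, is an embedding of topological spaces, while $(X \times M, f_{*})$ is $M$-full by the surjectivity established above. This exhibits the asserted embedding of $X$ into an $M$-full space.
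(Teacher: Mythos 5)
Your proposal is correct and follows essentially the same route as the paper: the same adjunction $X\times M$ with the same formula for $f_{*}$ (diagonal $\mapsto e$, off-diagonal $\mapsto m_{1}+f(x_{1},x_{2})+m_{2}$), the same surjectivity witness $f_{*}((x,m),(x,e_{M}))=m$, and the same identification of $X$ with $X\times\{e_{M}\}$. Your treatment of the degenerate subcases of the triangle inequality is in fact more explicit than the paper's, which dismisses them as straightforward.
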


\begin{lmm} \textit{Let $M$ be a commutative MVS. Then every $M$-space $X$ can be embedded into an $M$-convex space.}
\begin{proof}
Denote $0=e_{M}$. We introduce some notation. If $(X,f)$ is an $M$-space, then we denote by $X_{f}$ the subset of $X\times M\times M\times X$ consisting of the elements $(x_{1},m_{1},m_{2},x_{2})$ such that $m_{1}+m_{2}=f(x_{1},x_{2})$. Furthermore we identify $x$ and $(x,0,0,x)$. Intuitively $X_{f}$ is constructed from the points in $X$ and from the new points lying between two points in $X$. 

Let $(X,f_{1})$ be an $M$-space. Define a map $f_{2}\colon X_{f_{1}}\times X_{f_{1}}\rightarrow M$ by setting
\[
f_{2}((x_{1},m_{1},m_{2},x_{2}),(x_{3},m_{3},m_{4},x_{4}))=0,
\] 
when $(x_{1},m_{1},m_{2},x_{2})=(x_{3},m_{3},m_{4},x_{4})$, and otherwise
\[
f_{2}((x_{1},m_{1},m_{2},x_{2}),(x_{3},m_{3},m_{4},x_{4}))=m_{2}+f(x_{2},x_{3})+m_{3}.
\]
It can be straightforwardly checked that $(X_{f_{1}},f_{2})$ is an $M$-space and furthermore $f_{2}((x,0,0,x),(y,0,0,y))=f_{1}(x,y)$. In the same way we can define an $M$-space $((X_{f_{1}})_{f_{2}},f_{3})$ and so on. Denote $X_{1}=X$, $X_{2}=X_{f_{1}}$, $X_{3}=(X_{f_{1}})_{f_{2}}$ and so on. Considering the identifications, we see that $X_{1}\subset X_{2}\subset X_{3}\subset ...$ . Let $X_{\omega}=\bigcup_{n\in \mathbb{N}} X_{n}$. Define $f_{\omega}\colon X_{\omega}\times X_{\omega}\rightarrow M$ by setting $f_{\omega}(x,y)=f_{\min_{x,y}}(x,y)$, when $x,y\in X_{\omega}$ and $\min_{x,y}\in\mathbb{N}$ is the smallest index $k$ such that both $x$ and $y$ belong to the set $X_{k}$.

Let $x,y,z\in X_{\omega}$. Now
\begin{align*}
f_{\omega}(x,y)+f_{\omega}(y,z)&=f_{\min_{x,y}}(x,y)+f_{\min_{y,z}}(y,z)
\\
&=f_{\max\{\min_{x,y},\min_{y,z}\}}(x,y)+ f_{\max\{\min_{x,y},\min_{y,z}\}}(y,z)
\\
&\unrhd f_{\max\{\min_{x,y},\min_{y,z}\}}(x,z)=f_{\omega}(x,z),
\end{align*}
because every $f_{k}$ is a quasimetric function and $f_{k}(x,y)=f_{l}(x,y)$ always, when $x,y\in X_{k}\cap X_{l}$. 
Therefore $f_{\omega}$ satisfies the condition (f1). The condition (f2) follows directly from the definition of $f_{\omega}$. Therefore $f_{\omega}$ is a quasimetric function. Furthermore $X=X_{1}\subset X_{\omega}$ and $f_{1}$ is the restriction of $f_{\omega}$ to the set $X_{1}$. Thus $X$ can be embedded into the $M$-space $X_{\omega}$.

The space $X_{\omega}$ is $M$-convex: Let $x,y\in X_{\omega}$, $f_{\omega}(x,y)=m+n$ and $k,l\in \mathbb{N}$ indexes such that $x\in X_{k}$ and $y\in X_{l}$. We may assume that $m\neq 0\neq n$. Now $(x,m,n,y)\in X_{\max\{k,l\}+1}$ and 
\[
f_{\omega}((x,0,0,x),(x,m,n,y))=0+f_{\omega}(x,x)+m=m
\]
and
\[
f_{\omega}((x,m,n,y),(y,0,0,y))=n+f_{\omega}(y,y)+0=n.
\]
Therefore $(X_{\omega},f_{\omega})$ is an $M$-convex space such that $X$ can be embedded into it.
\end{proof}
\end{lmm}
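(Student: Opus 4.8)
The plan is to enlarge $X$ by repeatedly inserting ``intermediate points'' between pairs of points until the convexity condition (C) is forced to hold. The basic building block is a single insertion step: given an $M$-space $(X,f)$, I would form the set $X_{f}$ of quadruples $(x_{1},m_{1},m_{2},x_{2})$ satisfying $m_{1}+m_{2}=f(x_{1},x_{2})$, thinking of such a quadruple as a point lying ``on a geodesic'' from $x_{1}$ to $x_{2}$ at distance $m_{1}$ from $x_{1}$ and $m_{2}$ from $x_{2}$, with the original points recovered via the identification $x=(x,0,0,x)$. On $X_{f}$ I would put the quasimetric sending a pair of distinct quadruples $p=(x_{1},m_{1},m_{2},x_{2})$, $q=(x_{3},m_{3},m_{4},x_{4})$ to $m_{2}+f(x_{2},x_{3})+m_{3}$, i.e. exit $p$ toward $x_{2}$, travel in $X$ from $x_{2}$ to $x_{3}$, then enter $q$ from $x_{3}$. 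A routine case analysis, using associativity and commutativity of $+$, should show this is again a quasimetric function restricting to $f$ on the copy of $X$.

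The key realization is that one insertion step is not enough: the newly created points form new pairs, and decompositions of \emph{their} distances will in general again demand intermediate points. I would therefore iterate, setting $X_{1}=X$, $X_{2}=X_{f}$, $X_{3}=(X_{f})_{f_{2}}$, and so on, obtaining an increasing chain $X_{1}\subset X_{2}\subset\cdots$ of $M$-spaces whose quasimetrics $f_{n}$ are mutually compatible in the strong sense that $f_{k}(x,y)=f_{l}(x,y)$ whenever $x,y\in X_{k}\cap X_{l}$. Passing to the union $X_{\omega}=\bigcup_{n}X_{n}$, I would define $f_{\omega}(x,y)=f_{N}(x,y)$, where $N=\min_{x,y}$ is the least index with $x,y\in X_{N}$; the compatibility identity is exactly what makes this well defined.

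Next I would check that $f_{\omega}$ is a quasimetric function. Condition (f2) is immediate from the construction. For the triangle inequality, given $x,y,z$ I would choose a common level $N=\max\{\min_{x,y},\min_{y,z}\}$ containing all three points, so that $f_{\omega}$ agrees with $f_{N}$ on each of the three pairs by compatibility, and then invoke the triangle inequality already established at level $N$. Finally $X=X_{1}\subset X_{\omega}$ with $f_{\omega}$ restricting to $f$ on $X\times X$, so the inclusion is an embedding, and convexity of $X_{\omega}$ follows because for any $x,y\in X_{\omega}$ with $f_{\omega}(x,y)=m+n$ the point $(x,m,n,y)$ already lives in the next level and satisfies $f_{\omega}(x,(x,m,n,y))=m$ and $f_{\omega}((x,m,n,y),y)=n$.

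The main obstacle I anticipate is not any single computation but keeping the bookkeeping honest across the whole construction: verifying that each one-step extension is genuinely a quasimetric (its triangle inequality splits into several subcases according to which of the points coincide), and above all confirming the compatibility identity $f_{k}=f_{l}$ on overlaps, since this is precisely what makes $f_{\omega}$ well defined and lets the triangle inequality in $X_{\omega}$ be reduced to a single finite level. Once compatibility is secured, both the embedding claim and the verification of (C) should be essentially immediate.
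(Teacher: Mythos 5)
Your proposal is correct and follows essentially the same route as the paper: the same quadruple construction $X_{f}$ with the identification $x=(x,0,0,x)$, the same one-step quasimetric $m_{2}+f(x_{2},x_{3})+m_{3}$, the same iteration to a union $X_{\omega}$ with $f_{\omega}$ defined at the minimal common level, and the same verification of convexity via the point $(x,m,n,y)$ at the next level. The compatibility of the $f_{k}$ on overlaps, which you rightly flag as the crux, is exactly what the paper relies on as well.
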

 \begin{coroll}\label{full and convex}
 \textit{Let $M$ be a commutative MVS. Then every $M$-space $X$ can be embedded into an $M$-full and $M$-convex space $\tilde{X}$.}
 \begin{proof}
 Let $M$ be commutative and $X$ an $M$-space. Then $X$ can be embedded into an $M$-full space $Y$ and $Y$ into an $M$-convex space $\tilde{X}=Y_{\omega}$. Clearly $Y_{\omega}\supset Y$ is also $M$-full, and thus $X$ can be embedded into the $M$-full and $M$-convex space $\tilde{X}$.
 \end{proof}
 \end{coroll}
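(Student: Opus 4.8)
The plan is to compose the two embeddings furnished by the two preceding lemmas, taking care that the order is chosen so that the second construction preserves the property established by the first. First I would invoke the lemma on full spaces to embed the given $M$-space $X$ into an $M$-full space $Y$; recall that this embedding is quasimetric-preserving, so the restriction of $f_{Y}$ to (the image of) $X$ agrees with the original quasimetric function on $X$, and $f_{Y}$ itself is surjective onto $M$.

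Next I would apply the lemma on convex spaces to the space $Y$, obtaining an embedding of $Y$ into the $M$-convex space $Y_{\omega}$. Since a composition of embeddings is again an embedding, this immediately yields an embedding of $X$ into $\tilde{X}=Y_{\omega}$, and $\tilde{X}$ is $M$-convex by the very construction of $Y_{\omega}$. It remains only to check the fullness of $\tilde{X}$.

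The one point requiring verification is that $Y_{\omega}$ remains $M$-full. For this I would use that $Y=Y_{1}\subset Y_{\omega}$ together with the fact, established in the convexity lemma, that $f_{\omega}$ restricted to $Y\times Y$ coincides with $f_{Y}$. Since $f_{Y}$ is already surjective onto $M$, the map $f_{\omega}$ is surjective as well, so $Y_{\omega}$ is $M$-full. Taking $\tilde{X}=Y_{\omega}$ then gives a space that is simultaneously $M$-full and $M$-convex into which $X$ embeds, which is exactly the assertion.

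The only real subtlety I anticipate is the choice of order of the two constructions. Applying them in reverse (convexifying first, then enforcing fullness) is not obviously safe, since the product construction $X\times M$ used to achieve fullness need not preserve the convexity condition (C). Performing fullness first and convexity second avoids this difficulty, precisely because the convexification only enlarges the space by inserting intermediate points and never shrinks the range of the quasimetric function; hence surjectivity, once obtained, is automatically inherited.
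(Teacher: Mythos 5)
Your proposal is correct and follows exactly the paper's argument: embed $X$ into an $M$-full space $Y$ via the fullness lemma, then embed $Y$ into $\tilde{X}=Y_{\omega}$ via the convexity lemma, and note that fullness is inherited because $f_{\omega}$ restricts to the surjective $f_{Y}$ on $Y$. Your additional remark about why the order of the two constructions matters is a sensible observation but does not change the substance.
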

 
 \begin{thm}\label{Q theorem}
 Let $M$ be an atom-free MVS and $(X,f)$ an $M$-full and $M$-convex space. Then there exists a set $U_{0}\subset X\times X$ that contains the diagonal $\Delta$ and a compatible quasiuniformity $\mathcal{U}$ on $X$ that has a base $\mathcal{V}^{*}$ such that $U_{0}\subsetneq U$ for all $U\in \mathcal{V}^{*}$, $\bigcup \mathcal{V}^{*}=X\times X$ and that satisfies the conditions below, when we define for all $x,y\in X$ a subset $U_{x,y}\ni (x,y)$ of $X\times X$ by setting
 \[
 U_{x,y}=\bigcap\{U\in\mathcal{V}^{*}:(x,y)\in U\}, \qquad \text{if } (x,y)\notin U_{0};
 \]
 \[
 U_{x,y}=U_{0}, \qquad \text{if } (x,y)\in U_{0}.
 \]
 \emph{(Q1)} The base $\mathcal{V}^{*}$ has a representation as a family $\{U_{x,y}:(x,y)\in (X\times X)\setminus U_{0}\}.$
 \\
 \emph{(Q2)} The following operation $+$ can be defined in the set $\mathcal{V}=\mathcal{V}^{*}\cup \{ U_{0} \}$:
 
 The sum $U+V$ is the intersection of the sets $W\in \mathcal{V}$ such that $V\circ U\subset W$. The set $\mathcal{V}$ is closed under the operation $+$ and furthermore the pair $(\mathcal{V},+)$ is an MVS with neutral element $U_{0}$.
 \\
 \emph{(Q3)} When a relation $\unlhd$ is defined by $+$ as usually, then $U\unlhd V$ if and only if $U\subset V$.
  \\
 \emph{(Q4)} The MVS $(\mathcal{V},+)$ is atom-free.  
 \begin{proof}
 Let $(X,f)$ be an $M$-full and $M$-convex space. Then by \ref{M as Q} the topology of $X$ is defined by the quasiuniformity $\mathcal{U}$ that has a base $\mathcal{V}^{*}$ whose elements are the sets
 \[
 U_{m}=\{ (x,y):f(x,y)\unlhd m\}, \qquad m\in M^{*}.
 \]
 
 Let $0=e_{M}$. Define $U_{0}=\{ (x,y):f(x,y)=0\}$, from which it follows that $\{(x,x):x\in X\}\subset U_{0}$ due to (f2). Let $m\in M^{*}$. Since $X$ is $M$-full, it holds that $m=f(x,y)$ for some $x,y\in X$. Now $f(x,y)\neq 0$. Thus $(x,y)\in U_{m}\setminus U_{0}$ and hence $U_{0}\subsetneq \mathcal{U}_{m}$. Clearly $\bigcup \mathcal{V}^{*}= X \times X$. Next we will show that $\mathcal{V}$ satisfies the conditions (Q1)--(Q4).

 Let $x,y\in X$ and $f(x,y)\neq 0$. Now $(x,y)\in U_{f(x,y)}\in \mathcal{V}^{*}$ and therefore $U_{x,y}\subset U_{f(x,y)}$.
 
 Let $(a,b)\in U_{f(x,y)}$ and $f(x,y)\neq 0$. Then $f(a,b)\unlhd f(x,y)$. Thus if $(x,y)\in U_{m}$, i.e., $f(x,y)\unlhd m$, then $(a,b)\in U_{m}$. Therefore $(a,b)\in U_{x,y}$. Hence $U_{f(x,y)}\subset U_{x,y}$ and so $U_{f(x,y)}=U_{x,y}$. Then especially (Q1) holds, because $(X,f)$ is $M$-full.

 Define an operation $+$ as in the condition (Q2). Let $U_{m},U_{m'}\in \mathcal{V}$. We will show that $U_{m}+U_{m'}=U_{m+m'}$.
 
 From the triangle inequality it follows that $U_{m'}\circ U_{m}\subset U_{m+m'}$. Therefore $U_{m}+U_{m'}\subset U_{m+m'}$.
 
 Let $n\in M$ be such that $U_{m'}\circ U_{m}\subset U_{n}$. Since $(X,f)$ is $M$-full, there exist $x,y\in X$ such that  $f(x,y)=m+m'$. Since $(X,f)$ is $M$-convex, there exists $z\in X$ for which $f(x,z)=m$ and $f(z,y)=m'$. Now $(x,z)\in U_{m}$ and $(z,y)\in U_{m'}$ and thus $(x,y)\in U_{m'}\circ U_{m}\subset U_{n}$. Thus $m+m'=f(x,y)\unlhd n$, and so $U_{m+m'}\subset U_{n}$. Therefore $U_{m+m'}\subset U_{m}+U_{m'}$. Thus $U_{m}+U_{m'}=U_{m+m'}\in \mathcal{V}$. The set $\mathcal{V}$ is therefore closed under the operation $+$.

 The pair $(\mathcal{V},+)$ is an MVS. The associativity, that $U_{0}$ is a neutral element, and the conditions (M3) and (M4) follow from the facts that $U_{m}+U_{m'}=U_{m+m'}$, that $U_{0}\notin \mathcal{V}^{*}$, and that $M$ is an MVS. Therefore (Q2) holds. 

Next we show that $U_{m}\subset U_{m'}$ if and only if $m\unlhd m'$. Assume that $U_{m}\subset U_{m'}$. Now $m=f(x,y)$ for some $x,y\in X$, because $X$ is $M$-full. Thus $(x,y)\in U_{m}\subset U_{m'}$, and so $m=f(x,y)\unlhd m'$. The other direction is obvious.

Let $U_{m}\unlhd U_{m'}$. Choose an $m''\in M$ such that $U_{m}+U_{m''}=U_{m'}$. Then $U_{m}\subset U_{m+m''}=U_{m}+U_{m''}=U_{m'}$, since $m\unlhd m+m''$.

Let $U_{m}\subset U_{m'}$. Then $m\unlhd m'$. Therefore there exists $m''\in M$ such that $m+m''=m'$. Thus $U_{m}+U_{m''}=U_{m+m''}=U_{m'}$. Therefore $U_{m}\unlhd U_{m'}$. Thus (Q3) holds.

The map $h\colon M\rightarrow \mathcal{V}$, $m\mapsto U_{m}$, is a surjective homomorphism. Now $\mathcal{V}$ is commutative, since $M$ is commutative. Let $U\in \mathcal{V}^{*}$. Then $U=U_{m}$ for some $m\in M^{*}$. Since $M$ 
is atom-free there exists $n\in M^{*}$ such that $n+n\unlhd m$. Let $V=U_{n}$. Then $V\in \mathcal{V}^{*}$ and $V+V=U_{n+n}\unlhd U_{m}=U$. Hence $\mathcal{V}$ is atom-free, i.e., (Q4) holds.
\end{proof}
 \end{thm}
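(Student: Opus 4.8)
The plan is to build everything on the quasiuniform structure already produced in \ref{M as Q}. First I would take $\mathcal{V}^{*}=\{U_{m}:m\in M^{*}\}$ with $U_{m}=\{(x,y):f(x,y)\unlhd m\}$, which by \ref{M as Q} is a base of a quasiuniformity $\mathcal{U}$ compatible with the topology of $X$. I set $U_{0}=\{(x,y):f(x,y)=e\}$ (writing $e=e_{M}$); the diagonal lies in $U_{0}$ by (f2). The preliminary requirements are then quick: since $X$ is $M$-full, every $m\in M^{*}$ is realized as $m=f(x,y)$, so $(x,y)\in U_{m}\setminus U_{0}$ gives $U_{0}\subsetneq U_{m}$; and $\bigcup\mathcal{V}^{*}=X\times X$ because $e\unlhd m$ for every $m\in M^{*}$.

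The computational heart is the identity $U_{x,y}=U_{f(x,y)}$ for $(x,y)\notin U_{0}$. The inclusion $U_{x,y}\subset U_{f(x,y)}$ is immediate, since $U_{f(x,y)}$ is one of the sets intersected to form $U_{x,y}$. For the reverse, if $f(a,b)\unlhd f(x,y)$ then $(a,b)$ lies in every $U_{m}$ that contains $(x,y)$, so $(a,b)\in U_{x,y}$. With $M$-fullness this gives $\mathcal{V}^{*}=\{U_{x,y}:(x,y)\in(X\times X)\setminus U_{0}\}$, i.e.\ (Q1). The decisive algebraic step is then $U_{m}+U_{m'}=U_{m+m'}$. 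The inclusion $\subset$ is just the triangle inequality, which yields $U_{m'}\circ U_{m}\subset U_{m+m'}$ and hence makes $U_{m+m'}$ one of the $W$ in the defining intersection. For $\supset$ I would take any $n$ with $U_{m'}\circ U_{m}\subset U_{n}$, use $M$-fullness to pick $x,y$ with $f(x,y)=m+m'$, and use $M$-convexity to split this as $f(x,z)=m$, $f(z,y)=m'$; then $(x,y)\in U_{m'}\circ U_{m}\subset U_{n}$ forces $m+m'\unlhd n$, so $U_{m+m'}\subset U_{n}$, and intersecting over all such $n$ gives $U_{m+m'}\subset U_{m}+U_{m'}$.

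Once this identity is in hand, the remaining conditions fall out by transporting structure along $h\colon m\mapsto U_{m}$. For (Q2), associativity, the neutrality of $U_{0}$, (M3), (M4) and commutativity of $(\mathcal{V},+)$ all descend from the corresponding properties of $M$ via $U_{m}+U_{m'}=U_{m+m'}$. For (Q3) I would show $U_{m}\subset U_{m'}\iff m\unlhd m'$: the backward direction is clear from the definition of $U_{m}$, and the forward direction uses $M$-fullness (realize $m=f(x,y)$, so $(x,y)\in U_{m}\subset U_{m'}$ gives $m\unlhd m'$); since $\unlhd$ on $\mathcal{V}$ is defined by $+$ and matches $\unlhd$ on $M$, this yields $U\unlhd V\iff U\subset V$. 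Finally (Q4): given $U_{m}$ with $m\in M^{*}$, atom-freeness of $M$ supplies $n\in M^{*}$ with $n+n\unlhd m$ (via [1, 2.10]); then $U_{n}\in\mathcal{V}^{*}$ and $U_{n}+U_{n}=U_{n+n}\unlhd U_{m}$, so $\mathcal{V}$ is atom-free.

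I expect the main obstacle to be the reverse inclusion $U_{m+m'}\subset U_{m}+U_{m'}$ in the identity $U_{m}+U_{m'}=U_{m+m'}$. This is the one place where both hypotheses are genuinely needed and interact: $M$-fullness lets me realize the abstract value $m+m'$ as a concrete distance $f(x,y)$, and $M$-convexity lets me factor that realization through an intermediate point $z$ so that $(x,y)$ actually appears in the composite $U_{m'}\circ U_{m}$. Without both, the intersection defining $U_{m}+U_{m'}$ could be strictly larger than $U_{m+m'}$, the operation $+$ on $\mathcal{V}$ would cease to mirror addition in $M$, and the whole transfer of the MVS structure — and with it (Q2)--(Q4) — would collapse.
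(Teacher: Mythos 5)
Your proposal is correct and follows essentially the same route as the paper: the same base $\mathcal{V}^{*}=\{U_{m}\}$ and $U_{0}$ from \ref{M as Q}, the same identification $U_{x,y}=U_{f(x,y)}$ for (Q1), the same key identity $U_{m}+U_{m'}=U_{m+m'}$ proved via the triangle inequality in one direction and $M$-fullness plus $M$-convexity in the other, and the same transport of the MVS structure along $m\mapsto U_{m}$ for (Q2)--(Q4). Your closing diagnosis of where fullness and convexity are genuinely needed matches exactly the role they play in the paper's argument.
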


\begin{thm}\label{QT theorem 2}
Let $(X,\mathcal{U})$ be a quasiuniform space for which there exist a set $U_{0}$ and a base $\mathcal{V^{*}}$  satisfying the conditions \emph{(Q1)--(Q3)}. Then $X$ is a $\mathcal{V}$-metrizable topological space.
\begin{proof}
Define a map $f\colon X\times X\rightarrow \mathcal{V}$ by setting $f(x,y)=U_{x,y}$. Due to the condition (Q2), the pair $(\mathcal{V},+)$ is an MVS and furthermore $f(x,x)=U_{x,x}$ is its neutral element.

Let $x,y,z\in X$. Since $(x,z)\in U_{y,z}\circ U_{x,y}$ it follows that $(x,z)\in U_{x,y}+U_{y,z}$. Thus $U_{x,z}\subset U_{x,y}+U_{y,z}$. Therefore $U_{x,z}\unlhd U_{x,y}+U_{y,z}$ by (Q3), and thus $f$ is a quasimetric function. Denote the topology induced by $f$ by $\mathcal{T}_{f}$  and the topology given by the quasiuniformity $\mathcal{U}$ by $\mathcal{T}_{\mathcal{U}}$.

The neighbourhood bases defining $\mathcal{T}_{f}$ and $\mathcal{T}_{\mathcal{U}}$ are the same, because for every $x\in X$ and $U\in \mathcal{V}^{*}$ we have
\begin{align*}
U[x]&=\{y\in X:(x,y)\in U\}=\{ y\in X:U_{x,y}\subset U\}
\\
&=\{y\in X:f(x,y)\subset U\}=\{ y\in X:f(x,y)\unlhd U\}
\\
&=\bar{B}_{f}(x,U).
\end{align*}
Thus the topologies are the same.
\end{proof}
\end{thm}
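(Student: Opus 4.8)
The plan is to manufacture a quasimetric function $f\colon X\times X\to\mathcal{V}$ directly from the sets $U_{x,y}$ and then verify that it induces exactly $\mathcal{T}_{\mathcal{U}}$. By (Q2) the pair $(\mathcal{V},+)$ is an MVS with neutral element $U_{0}$, so $\mathcal{V}$ is a legitimate target, and the natural candidate is $f(x,y)=U_{x,y}$. The argument then splits into three tasks: checking (f2), checking the triangle inequality (f1), and matching the two topologies.

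For (f2) I would argue that $f(x,x)=U_{x,x}=U_{0}=e_{\mathcal{V}}$: since $\Delta\subset U_{0}$ the pair $(x,x)$ lies in $U_{0}$, so the second clause in the definition of $U_{x,y}$ applies. For (f1) the key move is to replace $\unlhd$ by inclusion via (Q3), so that $f(x,z)\unlhd f(x,y)+f(y,z)$ becomes the inclusion $U_{x,z}\subset U_{x,y}+U_{y,z}$. To prove this I would first note that $(x,z)\in U_{y,z}\circ U_{x,y}$, taking $y$ itself as the intermediate point (indeed $(x,y)\in U_{x,y}$ and $(y,z)\in U_{y,z}$). By the definition of the sum in (Q2), every $W\in\mathcal{V}$ with $U_{y,z}\circ U_{x,y}\subset W$ then contains $(x,z)$, so $(x,z)$ lies in their intersection $U_{x,y}+U_{y,z}$. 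Finally $U_{x,z}$ is by construction the smallest member of $\mathcal{V}$ containing $(x,z)$, whence $U_{x,z}\subset U_{x,y}+U_{y,z}$. This makes $f$ a quasimetric function.

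It then remains to show $\mathcal{T}_{f}=\mathcal{T}_{\mathcal{U}}$, which I would handle at the level of neighbourhood systems. For $U\in\mathcal{V}^{*}$ the basic $\mathcal{T}_{\mathcal{U}}$-neighbourhood of $x$ is $U[x]=\{y:(x,y)\in U\}$, and I would identify it with the closed ball $\bar{B}_{f}(x,U)=\{y:f(x,y)\unlhd U\}$. By (Q3) the latter equals $\{y:U_{x,y}\subset U\}$; and $(x,y)\in U$ is equivalent to $U_{x,y}\subset U$, since $(x,y)\in U_{x,y}$ always holds while $U_{x,y}$ is the smallest member of $\mathcal{V}$ containing $(x,y)$ and $U$ is one such member. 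Hence $U[x]=\bar{B}_{f}(x,U)$ for every $x$ and every $U\in\mathcal{V}^{*}$, so the neighbourhood system $\textbf{B}_{\mathcal{U}_{\textbf{B}}}$ generating $\mathcal{T}_{\mathcal{U}}$ is precisely $\bar{\textbf{B}}_{f}$.

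I expect the genuine obstacle to be the final reconciliation of $\bar{\textbf{B}}_{f}$ with the open-ball system $\textbf{B}_{f}$ that by definition generates $\mathcal{T}_{f}$. The inclusion $B_{f}(x,U)\subset\bar{B}_{f}(x,U)$ gives $\mathcal{T}_{\mathcal{U}}\subset\mathcal{T}_{f}$ for free, but the reverse inclusion requires shrinking a closed ball inside an open one, i.e. finding for each $U\in\mathcal{V}^{*}$ some $U'\in\mathcal{V}^{*}$ with $\bar{B}_{f}(x,U')\subset B_{f}(x,U)$; this amounts to producing $U'$ with $U'\lhd U$, which is exactly atom-freeness of $\mathcal{V}$. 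The clean way to close the argument is therefore to invoke the earlier result that $\bar{\textbf{B}}_{f}$ and $\textbf{B}_{f}$ are equivalent neighbourhood systems, yielding $\mathcal{T}_{f}=\mathcal{T}_{\mathcal{U}}$ and hence the $\mathcal{V}$-metrizability of $X$. I would flag that this last step relies on atom-freeness of $\mathcal{V}$, so one should either carry (Q4) along explicitly or observe that it is forced by the surrounding construction.
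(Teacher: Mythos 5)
Your proof follows the paper's argument exactly: the same definition $f(x,y)=U_{x,y}$, the same verification of the triangle inequality via $(x,z)\in U_{y,z}\circ U_{x,y}$ together with (Q3), and the same identification $U[x]=\bar{B}_{f}(x,U)$. Your closing flag is well taken and is in fact the one point where you are more careful than the paper: the paper's proof simply declares the neighbourhood bases of $\mathcal{T}_{f}$ and $\mathcal{T}_{\mathcal{U}}$ to be ``the same,'' although $\mathcal{T}_{f}$ is by definition generated by the open balls $B_{f}(x,U)$ rather than the closed balls $\bar{B}_{f}(x,U)$, and passing between $\textbf{B}_{f}$ and $\bar{\textbf{B}}_{f}$ invokes the earlier equivalence theorem, which assumes the MVS is atom-free --- i.e.\ precisely (Q4), a condition not listed among the hypotheses here (though it is carried along in the summary where this theorem is actually applied).
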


\begin{empt}\textbf{Summary of the results.}
By \ref{full and convex} and \ref{Q theorem} every topological space $X$ that is $M$-metrizable with an atom-free $M$ can be embedded into a quasiuniform space $(X,\mathcal{U})$ that has a base that satisfies the conditions (Q1)--(Q4).

By \ref{QT theorem 2} and \ref{relative topology}, every topological space that can be embedded into a quasiuniform space that has a base satisfying the conditions (Q1)--(Q4) is $M$-metrizable with some atom-free $M$. We say that a topological space $X$ is \emph{atom-freely metrizable} if it is $M$-metrizable with some atom-free $M$. Thus we have characterized the class of atom-freely metrizable spaces by means of quasiuniform spaces.

Correspondingly we say that a topological space $X$ is \emph{commutatively metrizable} (\emph{strictly atom-freely metrizable}) if it is $M$-metrizable with a commutative (strictly atom-free) MVS $M$. From [1, 2.15] it follows that the class of commutatively metrizable spaces is the same as the class of strictly atom-freely metrizable spaces and thus the same as the class of atom-freely metrizable spaces. If $M$ in \ref{Q theorem} is strictly atom-free, then also $(\mathcal{V},+)$ is strictly atom-free; for otherwise by [1, 2.11] there would be $m_{0}\in M^{*}$ such that $U_{m_{0}}\unlhd U_{m}$ for every $m\in M^{*}$, from which it would follow that $m_{0}\unlhd m$ for every $m\in M^{*}$, which is a contradiction.  
\end{empt}
\section*{Acknowledgement}
The author wants to thank Jouni Luukkainen for thorough revision of this paper, correcting grammar mistakes, making helpful suggestions and clarifying proofs to make this paper more readable.

\end{document}